
\documentclass{amsart}

\usepackage[dvipdfm]{graphicx}
\usepackage{epstopdf}
\usepackage{epsf,latexsym,amsmath,amsfonts,amssymb,subfigure,mathrsfs,stmaryrd,eufrak,esint,amscd}
\usepackage[notref,notcite]{showkeys}
\usepackage{color}

\theoremstyle{plain}
\newtheorem{theorem}{Theorem}[section]
\newtheorem{lemma}[theorem]{Lemma}

\newtheorem{coro}[theorem]{Corollary}

\theoremstyle{definition}

\theoremstyle{remark}

\numberwithin{equation}{section}
\newcommand{\abs}[1]{\lvert#1\rvert}

\newcommand{\Lr}[1]{\left(#1\right)}
\newcommand{\lr}[1]{\bigl(#1\bigr)}
\newcommand{\set}[2]{\{\,#1\,\mid\,#2\,\}}

\newcommand{\nm}[2]{\|\,#1\,\|_{#2}}
\newcommand{\jump}[1]{[\![#1]\!]}

\newcommand{\wnm}[1]{|\!|\!|#1|\!|\!|}

\newcommand{\mc}[1]{\mathcal{#1}}
\newcommand{\mb}[1]{\mathbb{#1}}
\newcommand{\ov}[1]{\overline{#1}}

\def\na{\nabla}

\def\veps{\varepsilon}
\def\pa{\partial}
\def\Om{\Omega}
\def\om{\omega}
\def\lam{\lambda}

\def\md{\mathrm{d}}

\def\T{\mathrm{T}}

\newcommand{\nn}{\nonumber}
\begin{document}
\title[The TRUNC-type element]{The TRUNC element in any dimension and application to a modified Poisson equation}

\author{Hongliang Li}
\address{Department of Mathematics, Sichuan Normal University, Chengdu 610066, China\\
 email: lhl@sicnu.edu.cn;lihongliang@lsec.cc.ac.cn}

\author{Pingbing Ming}
\address{LSEC, Institute of Computational Mathematics and Scientific/Engineering Computing, AMSS\\
 Chinese Academy of Sciences, No. 55, East Road Zhong-Guan-Cun, Beijing 100190, China\\
 and School of Mathematical Sciences, University of Chinese Academy of Sciences, Beijing 100049, China\\
 email: mpb@lsec.cc.ac.cn}
 
\author{Yinghong Zhou}
\address{Department of Mathematics, Sichuan Normal University, Chengdu 610066, China\\}
\begin{abstract}
We introduce a novel TRUNC finite element in $n$ dimensions, encompassing the traditional TRUNC triangle as a particular instance. By establishing the weak continuity identity, we identify it as crucial for error estimate. This element is utilized to approximate a modified Poisson equation defined on a convex polytope, originating from the nonlocal electrostatics model. We have substantiated a uniform error estimate and conducted numerical tests on both the smooth solution and the solution with a sharp boundary layer, which align with the theoretical predictions.
\end{abstract}
\keywords{TRUNC element, nonlocal electrostatics, uniform error estimates, boundary layer}
\date{\today}
\maketitle
\section{Introduction}
The development of an efficient finite element approximation for fourth-order elliptic equations presents an intriguing challenge. The use of $H^2$-conforming finite elements appears to be a natural approach for approximating the fourth-order problem, but such elements necessitate global $C^1$ continuity, which can only be achieved with polynomials of sufficiently high degree. Furthermore, the implementation of $H^2$-conforming finite elements may pose difficulties due to the large number of degrees of freedom, especially in three dimensions. It is well-established that at least ninth-degree polynomials are required to construct a conforming finite element space on a tetrahedral mesh, demanding a minimum of $220$ degrees of freedom on each element, including the fourth-order derivative as a degree of freedom~\cite{Zenik:1973,Zhang:2009}. 

To address the challenge of attaining $C^1$continuity, a common approach involves employing nonconforming finite elements. In two dimensions, several successful elements have been developed, such as the Morley triangle~\cite{Morley:1968}, the TRUNC triangle~\cite{Argyris:1980}, and the Zienkiewicz-type triangle~\cite{Zienkiewicz:1965,Specht:1988,LiMingShi:2020}, among others. For recent advancements in nonconforming elements, particularly in $n$ dimensions, we refer to ~\cite{WangXu:2006,WangShiXu:2007,GuzmanLeykekhmanNeilan:2012,Wu:2019, HuZhang:2020_1,HuZhang:2020_2} and the references therein.

The TRUNC element stands out among available finite elements for its utilization of the same shape functions as the Zienkiewicz triangle and its convergence on arbitrary meshes, in contrast to the specific mesh requirements of the Zienkiewicz triangle, which converges only on meshes satisfying three parallel line conditions~\cite{Shi:1987}. Notably, the TRUNC triangle relies solely on function values and first-order derivatives at each vertex as degrees of freedom, without incorporating any edge degrees of freedom~\cite{Walk:2014}, making it particularly attractive for the implementation.

Furthermore, numerical benchmarks indicate that the accuracy of the TRUNC triangle is comparable to that of quintic conforming Argyris-Bell triangles~\cite{Argyris:1982,Argyris:1986} despite utilizing cubic polynomials in its shape functions. It is worth noting that the TRUNC triangle can also be viewed as a variant of Bergan's free formulation scheme~\cite{Bergan:1984}. The unique numerical integration technique employed in the TRUNC triangle sets it apart, leading to a convergence proof that deviates from the standard framework of nonconforming finite elements~\cite{Berge:1972}. The first convergence proof of the TRUNC triangle for the plate bending problem was achieved by \textsc{Shi}~\cite{Shi:1987}, with further extensions discussed in ~\cite{ShiZhang:1990,ChenShi:1991,GuoHuangShi:2006}. 

The aim of this study is to generalize the TRUNC triangle to higher dimensions, as the TRUNC triangle naturally allows for such an extension. This extension is particularly attractive in three dimensions, as it eliminates the need for edge or face degrees of freedom. As stated in~\cite{Walk:2014}, {\em this eliminates the need to associate a basis for the planes perpendicular to each edge; such a basis cannot depend continuously upon the edge orientation.}  

As an application of the TRUNC element, we consider a modified Poisson equation posed over a convex polytope $\Om$.
\begin{equation}\label{eq:bc}
\left\{
\begin{aligned}
\veps^2\triangle^2u-\triangle u&=f\qquad&&\text{in}\quad\Om,\\
u=\pa_nu&=0\qquad&&\text{on}\quad\pa\Om,
\end{aligned}\right.
\end{equation}
where $n$ is the outward unit normal of $\pa\Om$. 
The modified Poisson equation~\eqref{eq:bc} emerges in the realm of nonlocal electrostatics, particularly concerning the dielectric characteristics of ionic liquids and its efficacy in predicting the configuration of the electrical double layer~\cite{Hildebrandt:2004,Bazant:2011}. The filed $u$ in this model signifies the nonlocal electrostatic potential, while $\veps$ denotes the electrostatic correlation length parameter, assumed to be small. As $\veps$ approaches zero, the modified Poisson equation~\eqref{eq:bc} converges to the standard Poisson electrostatics. Our focus in this study lies within the regime when $\veps$ tends to zero.

In general, nonlocal electrostatics postulates a connection between the dielectric displacement field and the electric field, mediated by a permittivity kernel contingent on two spatial parameters, represented by
\[
\mathbf{D}(x)=\int\epsilon_{\veps}(y,x)\na u\,\md y,
\]
where $\mathbf{D}$ is the displacement field and $\epsilon_{\veps}(y,x)$ is the dielectric permittivity tensor. The modified Poisson equation~\eqref{eq:bc} may be viewed as a particular case of nonlocal electrostatics specifying with a Yukawa-type permittivity kernel tensor. \textsc{Li and Ming}~\cite{LiMing:2024} have proposed an asymptotic-preserving finite element method for~\eqref{eq:bc}, which completely preserves the asymptotic transition of the underlying partial differential equation. \textsc{Tian and Du}~\cite{TianDu:2014,TianDu:2020} have developed asymptotically compatible schemes for such models so as to offer robust numerical discretization to problems involving the nonlocal interactions on multiple scales. 

The convergence proof of the TRUNC element approximation of~\eqref{eq:bc} relies on two key components. The first component is a weak continuity identity, which can be viewed as a natural extension of two orthogonal identities established in ~\cite{Shi:1987}. The second component involves a novel regularity result, previously demonstrated only for problems posed on a bounded convex polygon. In our study, we extend this result to a convex polytope by integrating insights from recent investigations on the strain gradient elasticity model~\cite{LiMingWang:2021} and leveraging the combined effect of singular perturbation and 
homogenization~\cite{Niu:2022}. 

In this paper we shall adopt the standard notations for Sobolev spaces and norms~\cite{AdamsFournier:2003}. We denote $L^2(\Om)$ the square integrable function space over $\Om$, which is equipped with norm $\nm{\cdot}{L^2(\Om)}$ and inner product $(\cdot,\cdot)$. Let $H^m(\Om)$ be the Sobolev space of square integrable function with $m$-th weak derivatives, which is equipped with norm $\nm{\cdot}{H^m(\Om)}$. We may drop $\Om$ in the norm when there is no confusion may occur. We denote $H_0^m(\Om)$ the closure in $H^m(\Om)$ of the space of $C^\infty(\Om)$ functions with compact supports in $\Om$.

The subsequent sections of the paper are structured as follows: In Section 2, we present the formulation of the TRUNC element.
Section 3 contains the derivation of a uniform error estimate concerning $\veps$ for a modified Poisson problem.
Section 4 presents the numerical findings that validate the theoretical predictions.
\section{TRUNC finite element space}
Let $\Om\subset\mb{R}^d,d\geq2$ be a convex polytope. Let $\mc{T}_h$ be a simplicial triangulation of $\Omega$ with maximum mesh size $h$. We assume all elements in $\mc{T}_h$ are shape-regular in the sense of Ciarlet and Raviart~\cite{Ciarlet:1978}, i.e., there exists a constant $\gamma$ such that $h_K/\rho_K\le\gamma$, where $h_K$ is the diameter of the element $K$, and $\rho_K$ is the diameter of the largest ball inscribed into $K$, and $\gamma$ is the so-called chunkiness parameter ~\cite{BrennerScott:2008}. Let $\mc{F}_h$ and $\mc{V}_h$ be the sets of $(d-1)$-dimensional subsimplices and vertices in $\mc{T}_h$. We denote $\mc{F}_h^{\,B}=\set{F\in\mc{F}_h}{F\subset\pa\Om}$ and $\mc{F}_h^{\,I}=\mc{F}_h\setminus\mc{F}_h^{\,B}$ the set of boundary and interior subsimplex respectively. Similar notations apply to $\mc{V}_h$. Let $K\in\mc{T}_h$ be a $d$-dimensional simplex with vertices $\{a_i\}_{i=1}^{d+1}$. We denote by $F_i$ the $(d-1)$-dimensional subsimplex of $K$ opposite to vertex $a_i$, and by $e_{ij}$ the edge vector from $a_i$ to $a_j$, and by $\lam_i$ the barycentric coordinate associated with the vertex $a_i$. The TRUNC-type element is defined by the 
finite element triple $(K,Z_K,\Sigma_K)$ given by
\[
\left\{
\begin{aligned}
&Z_K=\mb{P}_2(K)\oplus\text{span}\set{\lam_i^2\lam_j-\lam_i\lam_j^2}{1\le i<j\le d+1},\\
&\Sigma_K=\set{p(a_i),(e_{ij}\cdot\na) p(a_i)}{1\le i\le d+1,j\neq i}.
\end{aligned}
\right.
\]
It is clear that the shape function space $Z_K$ is just the $d$-dimensional Zienkiewicz element. The following lemma gives the unisolvence of the finite element, we also list the explicit form of the basis functions.
\begin{lemma}
For the TRUNC-type element, the $\Sigma_K$ is $Z_K$-unisolvent. Moreover, let $\phi_i$ and $\phi_{ij}$ be the basis functions associated with the degree freedom $p(a_i)$ and $e_{ij}\cdot\na p(a_i)$, respectively. Then 
\begin{equation}\label{eq:basis}
\left\{
\begin{aligned}
\phi_i&=\lam_i+\sum_{j\neq i}(\lam_i^2\lam_j-\lam_i\lam_j^2),\\
\phi_{ij}&=\frac{1}{2}(\lam_i\lam_j+\lam_i^2\lam_j-\lam_i\lam_j^2).
\end{aligned}\right.
\end{equation}
\end{lemma}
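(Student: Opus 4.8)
The plan is to prove $Z_K$-unisolvence by a dimension count together with a direct verification that the listed basis functions reproduce the degrees of freedom, and then to observe that the explicit formulas in \eqref{eq:basis} automatically furnish both the injectivity and the surjectivity needed. First I would note that $\dim\mb{P}_2(K)=\binom{d+2}{2}$ and that the bubble span adds $\binom{d+1}{2}$ functions; one checks these are linearly independent of $\mb{P}_2(K)$ (each $\lam_i^2\lam_j-\lam_i\lam_j^2$ is a genuine cubic, and the span is visibly $\binom{d+1}{2}$-dimensional since the leading cubic monomials are distinct), so $\dim Z_K=\binom{d+2}{2}+\binom{d+1}{2}=(d+1)^2$. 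On the other hand $\#\Sigma_K=(d+1)+(d+1)d=(d+1)^2$, since each of the $d+1$ vertices contributes one value and $d$ directional derivatives along the edges emanating from it. Hence it suffices to show that the only $p\in Z_K$ annihilated by every functional in $\Sigma_K$ is $p\equiv0$, or equivalently to exhibit an explicit dual basis; I would do the latter, since the formulas are already claimed.

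Next I would verify \eqref{eq:basis} directly. For $\phi_i=\lam_i+\sum_{j\neq i}(\lam_i^2\lam_j-\lam_i\lam_j^2)$, using $\lam_k(a_m)=\del_{km}$ one gets $\phi_i(a_m)=\del_{im}$ immediately, since every bubble term vanishes at every vertex. For the derivative conditions one evaluates $(e_{m\ell}\cdot\na)\phi_i$ at $a_m$; here the key computational fact is that for the barycentric coordinates, $(e_{m\ell}\cdot\na)\lam_k$ is the constant $\del_{\ell k}-\del_{mk}$ (because $\lam_k$ is affine and $\lam_k(a_\ell)-\lam_k(a_m)=\del_{\ell k}-\del_{mk}$). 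Applying the product rule to a term like $\lam_i^2\lam_j$ and evaluating at $a_m$, only terms in which all but one factor survives at $a_m$ contribute, so the derivative of the linear part $\lam_i$ contributes $\del_{\ell i}-\del_{mi}$ while each quadratic-in-$\lam_i$ or product bubble contributes a correction that, after summing over $j\neq i$, must cancel against the linear part except in the intended slot. The analogous check for $\phi_{ij}=\tfrac12(\lam_i\lam_j+\lam_i^2\lam_j-\lam_i\lam_j^2)$ shows $\phi_{ij}(a_m)=0$ for all $m$ (it is a sum of bubbles) and that $(e_{m\ell}\cdot\na)\phi_{ij}(a_m)$ equals $1$ when $(m,\ell)=(i,j)$ and $0$ otherwise; the factor $\tfrac12$ is exactly what is needed because $(e_{ij}\cdot\na)(\lam_i\lam_j)$ at $a_i$ picks up a $2$ from symmetry in the two surviving evaluations. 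Once these identities hold, the $(d+1)^2$ functions $\{\phi_i,\phi_{ij}\}$ form a dual basis to $\Sigma_K$, so $\Sigma_K$ is $Z_K$-unisolvent and the basis functions are as claimed.

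The step I expect to be the main obstacle is the bookkeeping in the directional-derivative computation for $\phi_i$: one must carefully track, for fixed target vertex $a_m$ and edge direction $e_{m\ell}$, how the derivatives of the $2(d)$ bubble terms $\lam_i^2\lam_j-\lam_i\lam_j^2$ (summed over $j\neq i$) interact with the derivative of $\lam_i$, and confirm that the sum collapses to $\del_{\ell i}-\del_{mi}-\bigl(\text{something}\bigr)=\del_{m i}\cdot(\text{nothing extra})$, i.e. that the bubbles precisely kill the "wrong" directional derivatives of $\lam_i$ at the other vertices while leaving the value $1$ of $(e_{ij}\cdot\na)\phi_i$ at $a_i$ intact. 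Concretely, at $a_i$ one has $(e_{i\ell}\cdot\na)(\lam_i^2\lam_j)=2\lam_i\lam_j\,(e_{i\ell}\cdot\na)\lam_i+\lam_i^2(e_{i\ell}\cdot\na)\lam_j\big|_{a_i}=0$ and likewise for $\lam_i\lam_j^2$, so the bubbles do not disturb the vertex $a_i$; at a vertex $a_k$ with $k\neq i$, the term with $j=k$ contributes $(e_{k\ell}\cdot\na)(\lam_i^2\lam_k-\lam_i\lam_k^2)\big|_{a_k}=-(e_{k\ell}\cdot\na)\lam_i\big|_{a_k}=-(\del_{\ell i}-\del_{ki})=-\del_{\ell i}$, which exactly cancels the linear part's contribution $(e_{k\ell}\cdot\na)\lam_i=\del_{\ell i}$, giving $(e_{k\ell}\cdot\na)\phi_i(a_k)=0$ as required. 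I would present this cancellation cleanly rather than as a brute-force expansion, and then note that the $\phi_{ij}$ case is entirely analogous and slightly simpler.
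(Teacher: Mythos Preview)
Your overall strategy is sound and in fact more economical than the paper's: you prove unisolvence by exhibiting a dual basis, whereas the paper first proves unisolvence by induction on the dimension $d$ (restricting to faces, which are $(d-1)$-dimensional TRUNC elements, and concluding $p=C\prod_i\lam_i$ forces $C=0$ by degree count), and only afterwards verifies the basis formulas. Your route collapses these two steps into one, which is perfectly legitimate once the dimension count $\dim Z_K=(d+1)^2=\#\Sigma_K$ is in place.

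However, there is a genuine computational error in your verification of $\phi_i$. You claim that at the vertex $a_i$ the bubble terms do not contribute, writing $(e_{i\ell}\cdot\na)(\lam_i^2\lam_j)\big|_{a_i}=0$. This is false: at $a_i$ one has $\lam_i=1$, so the term $\lam_i^2\,(e_{i\ell}\cdot\na)\lam_j$ evaluates to $\del_{\ell j}$, not zero. In fact the bubbles are essential at $a_i$: the linear part gives $(e_{i\ell}\cdot\na)\lam_i\big|_{a_i}=\del_{\ell i}-\del_{ii}=-1$, and the bubble sum contributes $\sum_{j\ne i}\del_{\ell j}=+1$, so only the combination yields the required zero. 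Your treatment of the case $m=k\ne i$ is correct, but the case $m=i$ is not, and your description of the role of the factor $\tfrac12$ in $\phi_{ij}$ is also slightly off (the ``$2$'' is $1+1$: the quadratic part $\lam_i\lam_j$ and the cubic bubble each contribute $1$ to $(e_{ij}\cdot\na)\phi_{ij}(a_i)$, not a single symmetric term contributing $2$).

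The cleanest fix is to record once and for all the three identities
\[
e_{kl}\cdot\na\lam_i(a_k)=-\del_{ik}+\del_{il},\quad
e_{kl}\cdot\na(\lam_i\lam_j)(a_k)=\del_{ik}\del_{jl}+\del_{il}\del_{jk},\quad
e_{kl}\cdot\na(\lam_i^2\lam_j-\lam_i\lam_j^2)(a_k)=\del_{ik}\del_{jl}-\del_{il}\del_{jk},
\]
and then sum; this is exactly what the paper does, and it makes the cancellation at every vertex transparent without case-splitting on whether $m=i$ or $m\ne i$.
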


The above lemma for $d=2$ may be found in~\cite{Ciarlet:1978}. 
\begin{proof}
It is clear that 
\[
\dim Z_k=C_{d+2}^2+C_{d+1}^2=(d+1)^2=\dim \Sigma_K.
\]
It suffices to show that a function $p\in Z_K$ vanishes if all the degrees of freedom are zeros. We employ the induction method. It is just the Zienkiewicz triangle when $d=2$, then we have $p\equiv0$. We assume that $p=0$ for $d=n-1$. When $d=n$, each face $F_i$ of $K$ is $(n-1)$-simplex, and there is a natural restriction of $(K,Z_K,\Sigma_K)$ which defines a finite element:
\[
(F_i,Z_{F_i},\Sigma_{F_i})=(K,Z_K,\Sigma_K)_{|_{F_i}}.
\]
It is clear that $(F_i,Z_{F_i},\Sigma_{F_i})$ is just $(n-1)$-dimensional TRUNC-type element on $F_i$. By the assumption, we have $p_{|_{F_i}}\equiv0$. We conclude that
\[
p=C\prod_{1\le i\le n+1}\lam_i,
\]
where $C$ is a constant polynomial factor. Since $p$ is cubic, then we have that $C=0$ and $p$ vanishes if all the degrees of freedom are zeros.

Next we verify the shape functions associated with the degree of freedoms. A direct calculations gives
\[
\left\{
\begin{aligned}
&e_{kl}\cdot\na\lam_i(a_k)=-\delta_{ik}+\delta_{il},\\
&e_{kl}\cdot\na\lam_i\lam_j(a_k)=\delta_{ik}\delta_{jl}+\delta_{il}\delta_{jk},\\
&e_{kl}\cdot\na(\lam_i^2\lam_j-\lam_i\lam_j^2)(a_k)=\delta_{ik}\delta_{jl}-\delta_{il}\delta_{jk},
\end{aligned}\right.
\]
where $1\le i\neq j\le d+1$ and $1\le k\neq l\le d+1$. Using the above identities, we have
\[
e_{kl}\cdot\na\phi_i(a_k)=\delta_{ik}\Lr{-1+\sum_{j\ne i}\delta_{jl}}+\delta_{il}\Lr{1-\sum_{j\ne i}\delta_{jk}}=0
\]
It immediately implies that $\phi_i$ is the basis function associated with the degree of freedom $p(a_i)$. Proceeding along the same line, we may verify the remaining basis functions. This completes the proof.
\end{proof}

The TRUNC-type finite element space is defined by
\[
Z_h=\set{v\in H^1(\Om)}{v_{|_K}\in Z_K,K\in\mc{T}_h;\,v(a),\na v(a)\,\text{are continuous for all}\, a\in\mc{V}_h},
\]
and the corresponding homogeneous finite element space is defined by
\[
V_h=\set{v\in Z_h}{v(a),\na v(a)\,\text{vanish for all}\,a\in\mc{V}_h^B}.
\]
It is clear that $V_h$ is subspace of $Z_h$. 

By~\eqref{eq:basis}, for any $v\in Z_h$ and $K\in\mc{T}_h$, we may write it as
\[
v_{|_K}=\sum_{1\le i\le d+1}v(a_i)\phi_i+\sum_{1\le i\le d+1}\sum_{j\ne i}e_{ij}\cdot\na v(a_i)\phi_{ij}.
\]
We define a local projection $\Pi_K{:} Z_h\rightarrow\mb{P}_2(K)$ by
\begin{equation}\label{eq:projection}
\Pi_K(v){:}=\sum_{1\le i\le d+1}v(a_i)\bar{\phi}_i+\sum_{1\le i\le d+1}\sum_{j\ne i}e_{ij}\cdot\na v(a_i)\bar{\phi}_{ij},
\end{equation}
where 
\[
\bar{\phi}_i=\lam_i\quad\text{and}\quad
\bar{\phi}_{ij}=\frac{1}{2}\lam_i\lam_j.
\]
It is clear the $\Pi_Kv$ is just a local quadratic interpolant of $v$ on element $K$. We denote the residue $\Pi_K^c=I-\Pi_K$. We define global projection $\Pi_h{:} Z_h\rightarrow H^1(\Om)$ with $\lr{\Pi_h}_{|_K}=\Pi_K$ for any $K\in\mc{T}_h$ and let 
the residue $\Pi_h^c=I-\Pi_h$ and we summarize the properties of $\Pi_K$ in the following theorem.
\begin{theorem}
The projection $\Pi_K{:}\,Z_h\rightarrow\mb{P}_2(K)$ has the following properties:
\begin{enumerate}
\item General weak continuity: For any $v\in Z_h$ and for any symmetric matrix $S$ of order $d$, we have
\begin{equation}\label{eq:weakcontinuity}
\sum_{1\le i\le d+1}(n_{F_i})^{\,\T}\cdot S\cdot Q_{F_i}\lr{\na\Pi_K^cv}=0,
\end{equation}
where
\[
Q_{F_i}(v)=\frac{\abs{F_i}}{d}\sum_{j\neq i}v(a_j)
\]
is the linear quadrature scheme.

\item For any $v\in Z_h$, there holds 
\begin{equation}\label{interpolation:1}
\nm{\na^k\lr{v-\Pi_Kv}}{L^2(K)}\leq Ch^{m-k}\nm{\na^m v}{L^2(K)},\quad 0\le k,m\le3.
\end{equation}
\end{enumerate}
\end{theorem}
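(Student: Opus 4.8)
The plan is to treat the two assertions separately; identity \eqref{eq:weakcontinuity} carries all the real content, while \eqref{interpolation:1} will follow from a routine affine scaling. Throughout write $b_{ij}:=\lam_i^2\lam_j-\lam_i\lam_j^2$ for the cubic bubbles, so that $Z_K=\mb{P}_2(K)\oplus\mathrm{span}\set{b_{ij}}{1\le i<j\le d+1}$. First I would record two facts about $\Pi_K$. (a) $\Pi_K$ reproduces $\mb{P}_2(K)$: for $p\in\mb{P}_2(K)$, $p-\Pi_Kp$ lies in $\mb{P}_2(K)$ and, by the explicit forms \eqref{eq:basis} of $\phi_i,\phi_{ij}$ together with \eqref{eq:projection}, also in $\mathrm{span}\{b_{ij}\}$; since the sum is direct, $p=\Pi_Kp$. (b) $\Pi_Kb_{ij}=0$: from the derivative identities in the proof of the Lemma, $b_{ij}$ has all vertex values zero and its only nonzero degrees of freedom are $e_{ij}\cdot\na b_{ij}(a_i)=1$ and $e_{ji}\cdot\na b_{ij}(a_j)=-1$, so $\Pi_Kb_{ij}=\bar{\phi}_{ij}-\bar{\phi}_{ji}=0$ because $\bar{\phi}_{ij}=\bar{\phi}_{ji}$. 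Together (a) and (b) show $\Pi_K^cv\in\mathrm{span}\{b_{ij}\}$ for every $v\in Z_h$, so by linearity it suffices to prove \eqref{eq:weakcontinuity} for $v=b_{ij}$, where $\Pi_K^cv=b_{ij}$.

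Next I would compute the vertex gradients of the bubbles. Since the $\na\lam_m$ are constant vectors, differentiating $b_{ij}$ and evaluating at the vertices gives $\na b_{ij}(a_i)=\na\lam_j$, $\na b_{ij}(a_j)=-\na\lam_i$, and $\na b_{ij}(a_l)=0$ for $l\notin\{i,j\}$. Hence, from the definition of $Q_{F_k}$, the vector $Q_{F_k}(\na b_{ij})$ equals $-\tfrac{\abs{F_k}}{d}\na\lam_i$ for $k=i$, $\tfrac{\abs{F_k}}{d}\na\lam_j$ for $k=j$, and $\tfrac{\abs{F_k}}{d}(\na\lam_j-\na\lam_i)$ otherwise. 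Inserting the geometric identity $\abs{F_k}\,n_{F_k}=-d\abs{K}\,\na\lam_k$ (which holds because $\lam_k$ is the affine function vanishing on $F_k$ and increasing toward $a_k$, so $\na\lam_k=-|\na\lam_k|\,n_{F_k}$ with $|\na\lam_k|=\abs{F_k}/(d\abs{K})$ the reciprocal height), the left-hand side of \eqref{eq:weakcontinuity} becomes $-\abs{K}$ times
\[
-(\na\lam_i)^{\T}S\na\lam_i+(\na\lam_j)^{\T}S\na\lam_j+\sum_{k\neq i,j}(\na\lam_k)^{\T}S(\na\lam_j-\na\lam_i).
\]
Replacing $\sum_{k\neq i,j}\na\lam_k$ by $-\na\lam_i-\na\lam_j$ via $\sum_k\na\lam_k=0$, and cancelling the cross terms through $(\na\lam_i)^{\T}S\na\lam_j=(\na\lam_j)^{\T}S\na\lam_i$ (symmetry of $S$), the expression collapses to $0$, which proves \eqref{eq:weakcontinuity}.

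For \eqref{interpolation:1} I would pass to the reference simplex $\wh K$ through the affine map $F_K:\wh K\to K$. The vertex-value functionals and the functionals $e_{ij}\cdot\na(\cdot)$ are affine invariant — $e_{ij}$ transforms by the linear part $B_K$ and the gradient by $B_K^{-\T}$, so their pairing is unchanged — and $Z_K$, $\mb{P}_2(K)$ are affine equivalent to their counterparts on $\wh K$; hence $\Pi_K$ is affine equivalent, i.e. $(\Pi_K^cv)\circ F_K=\Pi_{\wh K}^c(v\circ F_K)$. On $\wh K$ the map $\Pi_{\wh K}^c$ annihilates $\mb{P}_2(\wh K)$, so the kernel of the seminorm $\nm{\na^m\cdot}{L^2(\wh K)}$ on the finite dimensional space $Z_{\wh K}$, which is contained in $\mb{P}_2(\wh K)$ for $0\le m\le 3$, is contained in the kernel of $\nm{\na^k\Pi_{\wh K}^c\cdot}{L^2(\wh K)}$; the equivalence of seminorms on a finite dimensional space then gives $\nm{\na^k\Pi_{\wh K}^c\wh v}{L^2(\wh K)}\le C\nm{\na^m\wh v}{L^2(\wh K)}$. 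Transforming back, the Jacobian factors cancel and the shape-regularity bounds $\|B_K\|\le Ch_K$, $\|B_K^{-1}\|\le Ch_K^{-1}$ yield $\nm{\na^k(v-\Pi_Kv)}{L^2(K)}\le Ch_K^{\,m-k}\nm{\na^m v}{L^2(K)}$; since $h_K\le h$ this is \eqref{interpolation:1} (for $m<k$ it is read as the inverse inequality on the finite dimensional space $Z_K$).

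The main obstacle is the bookkeeping in \eqref{eq:weakcontinuity}: one must first see that $\Pi_K^cv$ has no quadratic component — which rests on the slightly hidden identity $\Pi_Kb_{ij}=0$ — then get the vertex gradients of the bubbles right, and finally produce the cancellation using both $\sum_k\na\lam_k=0$ and the symmetry of $S$. Once \eqref{eq:weakcontinuity} is established, \eqref{interpolation:1} is entirely standard.
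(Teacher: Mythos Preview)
Your proof is correct and follows essentially the same route as the paper: both reduce \eqref{eq:weakcontinuity} to the cubic bubbles $b_{ij}$, compute their vertex gradients, express $n_{F_k}$ through $\na\lam_k$, and obtain the cancellation from $\sum_k\na\lam_k=0$ together with the symmetry of $S$; and both treat \eqref{interpolation:1} as a standard scaling/inverse-estimate consequence of $\Pi_K$ reproducing $\mb{P}_2$. Your justification that $\Pi_K^cv\in\mathrm{span}\{b_{ij}\}$ via the identity $\Pi_Kb_{ij}=\bar\phi_{ij}-\bar\phi_{ji}=0$ is a bit more explicit than the paper's one-line appeal to \eqref{eq:basis}--\eqref{eq:projection}, but it is the same observation.
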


For any $F\in\pa K$ and $v\in Z_h$, $\lr{\na\Pi_Kv}_{|_F}\in\mb{P}_1(F)$, by the generalized weak continuity condition~\eqref{eq:weakcontinuity}, there holds
\begin{align*}
\int_{\pa K}n^{\T}\cdot S\cdot\na\Pi_Kv\md S&=\sum_{F\subset\pa K}n^{\T}\cdot S\cdot Q_F(\na\Pi_Kv)\\
&=\sum_{F\subset\pa K}n^{\T}\cdot S\cdot \lr{Q_F(\na v)-Q_F(\na\Pi_K^cv)}\\
&=\sum_{F\subset\pa K}n^{\T}\cdot S\cdot Q_F(\na v).
\end{align*}
Since $Q_F(\na v)$ is continuous across $F$, i.e., $\jump{Q_F(\na v)}_{|_F}=0$ for all $F\in\mc{F}_h^I$, the identity~\eqref{eq:weakcontinuity} may be regarded as a kind of general weak continuity. Moreover, it plays an important role in error estimates. \textsc{Shi}~\cite[identities (18)-(19)]{Shi:1987} has proposed two identities for the plate bending problem, i.e., for any $v,w\in Z_h$, there holds
\begin{equation}\label{eq:1}
\sum_{F\subset\pa K} Q_F(\pa_n\Pi_K^cv)=0,
\end{equation}
and
\begin{equation}\label{eq:2}
\sum_{F\subset\pa K}\Lr{\Lr{\frac{\pa^2\Pi_Kw}{\pa n^2}}_{|_F}Q_F(\pa_n\Pi_K^cv)+\Lr{\frac{\pa^2\Pi_Kw}{\pa n\pa t}}_{|_F}Q_F(\pa_t\Pi_K^cv)}=0,
\end{equation}
where $n$ is unit outward normal of $F$, and $t$ is unit tangential vector along $F$. Both~\eqref{eq:1} and~\eqref{eq:2} follow from the identity~\eqref{eq:weakcontinuity} by specifying $S=I_{2\times2}$ and $S=\na^2\Pi_Kw$.

We are ready to prove the above theorem. It is worth noting that for any $v\in Z_h$, $v_{|_K}\in\mb{P}_3(K)$ and $\Pi_Kv$ is the quadratic part of $v_{|_K}$. The interpolate estimate~\eqref{interpolation:1} is derived by standard interpolation estimates and the inverse estimates. It remains to prove~\eqref{eq:weakcontinuity}. 

\begin{proof}
By~\eqref{eq:basis} and~\eqref{eq:projection}, we have that for any $v\in Z_h$ and $K\in\mc{T}_h$, 
\[
\Pi_K^cv\in\mathrm{span}\set{\lam_i^2\lam_j-\lam_i\lam_j^2}{1\le i<j\le d+1}.
\]
We only need to prove that~\eqref{eq:weakcontinuity} is valid for all $\lam_j^2\lam_k-\lam_j\lam_k^2$. 

The unit normal vector out of the $F_i$ is
\[
n_{F_i}=-\frac{\na\lam_i}{\abs{\na\lam_i}}=-\frac{d!\abs{K}}{\abs{F_i}}\na\lam_i,
\]
where $\abs{\na\lam_i}$ is the length of the vector $\na\lam_i$. A straightforward calculation gives
\[
Q_{F_i}\lr{\na\lr{\lam_j^2\lam_k-\lam_j\lam_k^2}}=\frac{\abs{F_i}}{d}\Big(\na\lam_k\lr{1-\delta_{ij}}-\na\lam_j\lr{1-\delta_{ik}}\Big),\quad 1\le k\le d+1.
\]

Noting that $S$ is symmetric, and employing the above two identities, we obtain
\begin{align*}
\sum_{1\le i\le d+1}&(n_{F_i})^{\,\T}\cdot S\cdot Q_{F_i}(\na\lr{\lam_j^2\lam_k-\lam_j\lam_k^2})
=(d-1)!\abs{K}\Big(\na\lam_j^{\T}\cdot S\cdot\na\lam_j\\
&\quad-\na\lam_k^{\T}\cdot S\cdot\na\lam_k-\sum_{i\notin\{j,k\}}\na\lam_i^{\T}\cdot S\cdot\lr{\na\lam_k-\na\lam_j}\Big)
=0,
\end{align*}
where we have used the facts
\[
\sum_{i\notin\{j,k\}}\na\lam_i=-\lr{\na\lam_j+\na\lam_k}\quad\text{and}\quad
\na\lam_j^{\T}\cdot S\cdot\na\lam_k=\na\lam_k^{\T}\cdot S\cdot\na\lam_j.
\]
This proves~\eqref{eq:weakcontinuity}.
\end{proof}

The degrees of freedom of $V_h$ involve the first order derivatives at each vertices, the associated standard interpolant is not well-defined for the functions belonging to $H^2(\Om)$. Since the TRUNC-type element shares the same degrees of freedom with the new Zienkiewicz-type element~\cite{WangShiXu:2007}~\footnote{This element is a natural extension of the Specht triangle~\cite{Specht:1988} in three dimension, which may be dubbed as the Specht tetrahedron.}, we adopt the regularized interpolant constructed in~\cite{LiMingWang:2021}. 
\begin{lemma}\label{lema:inter}
There exists a regularized interpolant $I_h:H_0^2(\Om)\rightarrow V_h$, such that for any $v\in H^m(\Om)\cap H_0^2(\Om)$ with $1\le m\le 3$, there holds
\begin{equation}\label{interpolation:2}
\nm{\na_h^k(v-I_hv)}{L^2}\le Ch^{m-k}\nm{\na^m v}{L^2},\quad 0\le k\le m,
\end{equation}
where $\na_h^kv$ is defined in the piecewise manner, i.e., $\lr{\na^k_hv}_{|_K}{:}=\na^k\lr{v_{|_K}}$. 

Moreover, we denote $\bar{I}_h{:}=\Pi_h\circ I_h$, and $\bar{I}_h$ is a quasi-interpolant in the sense that for any $v\in H^m(\Om)\cap H_0^2(\Om)$ with $1\le m\le 3$, there holds
\begin{equation}\label{interpolation:3}
\nm{\na_h^k(v-\bar{I}_hv)}{L^2}\le Ch^{m-k}\nm{\na^m v}{L^2},\quad 0\le k\le m.
\end{equation}
\end{lemma}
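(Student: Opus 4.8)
The plan is to obtain the regularized interpolant $I_h$ directly from the construction already given in~\cite{LiMingWang:2021} for the Zienkiewicz-type (Specht tetrahedron) element, since that element and the TRUNC-type element share the same degrees of freedom, namely the vertex values and vertex gradients. Concretely, the issue is that the naive interpolant is not defined on $H^2(\Om)$ because point evaluations of $\na v$ are not bounded functionals; the fix is to first smooth $v$ by a Cl\'ement/Scott--Zhang-type averaging operator and then apply the nodal interpolation to the smoothed function. First I would recall the regularized interpolant $\wt{I}_h$ of~\cite{LiMingWang:2021}: it is built by choosing, for each vertex $a\in\mc{V}_h$, a patch $\om_a$ of elements containing $a$, replacing $v(a)$ and $\na v(a)$ by suitable local $L^2$-projections of $v$ onto a polynomial space on $\om_a$ evaluated (or differentiated) at $a$, and then assembling the finite element function through the nodal basis $\{\phi_i,\phi_{ij}\}$. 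Because the degrees of freedom on boundary vertices are set to zero for $v\in H_0^2(\Om)$ (consistent with $v$ and $\na v$ vanishing on $\pa\Om$ in the trace sense), the resulting function lands in $V_h$. Since the nodal basis $\{\phi_i,\phi_{ij}\}$ of $Z_K$ differs from that of the Zienkiewicz element only in that both use exactly these functions --- indeed $Z_K$ \emph{is} the Zienkiewicz shape function space --- the operator transfers verbatim, and I set $I_h{:}=\wt{I}_h$.

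The estimate~\eqref{interpolation:2} then follows from the standard Cl\'ement-type machinery, which I would carry out as follows. On a fixed element $K$, write $v-I_hv=(v-q)-(I_h v-q)$ where $q\in\mb{P}_{m-1}$ (or $\mb{P}_2$, whichever is smaller in degree) is a polynomial chosen so that $I_h q=q$ --- the local polynomial reproduction property of the regularized interpolant, inherited from~\cite{LiMingWang:2021}. Applying the $L^2$-stability of $I_h$ on the patch $\wt K=\bigcup\{\om_a : a\in K\}$, scaling to the reference element, and using the Bramble--Hilbert lemma together with shape-regularity to control the finitely-overlapping patches, one gets $\nm{\na_h^k(v-I_hv)}{L^2(K)}\le Ch^{m-k}\nm{\na^m v}{L^2(\wt K)}$; summing over $K$ and using the bounded overlap of the patches yields~\eqref{interpolation:2}. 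For the quasi-interpolant bound~\eqref{interpolation:3}, I would write $v-\bar{I}_h v=(v-I_h v)+(I_h v-\Pi_h I_h v)=(v-I_h v)+\Pi_h^c(I_h v)$ and estimate the second term elementwise by~\eqref{interpolation:1} of the preceding theorem applied with $v$ replaced by $I_h v\in Z_h$: on each $K$, $\nm{\na^k\Pi_K^c(I_h v)}{L^2(K)}\le Ch^{m-k}\nm{\na_h^m(I_h v)}{L^2(K)}$, and then bound $\nm{\na_h^m(I_h v)}{L^2(K)}$ by $\nm{\na^m v}{L^2(\wt K)}$ using an inverse estimate plus~\eqref{interpolation:2} with $k$ lowered appropriately (this is where one needs $m\le 3$ so that the inverse estimate does not overshoot the polynomial degree of $Z_K$); combining with~\eqref{interpolation:2} for the first term gives~\eqref{interpolation:3}.

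The main obstacle, and the place where care is genuinely required rather than routine, is the interplay between the boundary conditions and the construction near $\pa\Om$: one must verify that setting the boundary-vertex degrees of freedom of $\wt{I}_h v$ to zero is compatible with the approximation estimate, i.e., that for $v\in H^m\cap H_0^2$ the patches $\om_a$ at boundary vertices can be chosen (one-sided, inside $\Om$) so that the local projection used to define the interior degrees of freedom still reproduces polynomials and still satisfies the Bramble--Hilbert estimate without picking up a boundary-layer-type loss. Since $\Om$ is a convex polytope and the mesh is shape-regular, this is exactly the situation handled in~\cite{LiMingWang:2021}, so I would invoke their construction rather than redo it; the only genuinely new check is that the algebraic identities defining $\Pi_K$ (the fact that $\Pi_K$ extracts the quadratic part and annihilates $\mathrm{span}\{\lam_i^2\lam_j-\lam_i\lam_j^2\}$) are purely about $Z_K$ and carry over to all dimensions, which~\eqref{eq:basis} already guarantees. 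Hence the proof is essentially a transfer-of-construction argument plus the elementwise bound~\eqref{interpolation:1} for the composition $\bar I_h=\Pi_h\circ I_h$.
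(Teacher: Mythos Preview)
Your proposal is correct and follows essentially the same route as the paper: cite the regularized interpolant of~\cite{LiMingWang:2021} for~\eqref{interpolation:2} (the paper does this in one line, since the TRUNC and NZT elements share degrees of freedom), and for~\eqref{interpolation:3} use the triangle inequality $v-\bar I_hv=(v-I_hv)+\Pi_h^c(I_hv)$ together with~\eqref{interpolation:1} applied to $I_hv\in Z_h$.

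One small simplification: to bound $\nm{\na_h^m(I_hv)}{L^2(K)}$ you do not need an inverse estimate (and the route you sketch via ``$k$ lowered appropriately'' would leave a stray $h^{-(m-j)}\nm{\na^j v}{L^2}$ term). The paper simply uses~\eqref{interpolation:2} at $k=m$, which gives $\nm{\na_h^m(v-I_hv)}{L^2}\le C\nm{\na^m v}{L^2}$, and hence $\nm{\na_h^m I_hv}{L^2}\le C\nm{\na^m v}{L^2}$ by the triangle inequality; this is where the restriction $m\le 3$ actually enters, through the range of validity of~\eqref{interpolation:1} and~\eqref{interpolation:2}, not through any inverse inequality.
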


The interpolation estimate~\eqref{interpolation:2} and~\eqref{interpolation:3} follow from the interpolation error estimates in~\cite[Theorem 3, interpolation error estimate (33)]{LiMingWang:2021}. Note that both $I_h$ and $\bar{I}_h$ are quasi-local 
in the sense that: under the same condition of Lemma~\ref{lema:inter}, there exists $C$ that depends only on the chunkiness parameter such that
\begin{equation}\label{eq:quasi-local1}
\nm{\na^k(v-I_hv)}{L^2(K)}\le Ch_K^{m-k}\nm{\na^m v}{L^2(\om_K)},\quad 0\le k\le m,\;1\le m\le 3,
\end{equation}
and
\begin{equation}\label{eq:quasi-local2}
\nm{\na^k(v-\bar{I}_hv)}{L^2(K)}\le Ch_K^{m-k}\nm{\na^m v}{L^2(\om_K)},\quad 0\le k\le m,\;1\le m\le 3,
\end{equation}
where $\om_K$ is the set of all elements in $\mc{T}_h$ that have a nonempty intersection of $K$. We shall frequently use these facts later on.
\begin{proof}
Summing up the local interpolation estimate ~\eqref{interpolation:1} for all $K\in\mc{T}_h$, we have that for any $v\in V_h$, there holds
\begin{equation}\label{interpolation:1_1}
\nm{\na^k_h\lr{v-\Pi_hv}}{L^2}\leq Ch^{m-k}\nm{\na^m_hv}{L^2},\quad 0\le k,m\le3.
\end{equation}
Using the triangle inequality
\[
\nm{\na_h^k(v-\bar{I}_hv)}{L^2}\le\nm{\na_h^k(v-I_hv)}{L^2}+\nm{\na_h^k(I-\Pi_h)I_hv}{L^2},
\]
the estimates~\eqref{interpolation:1_1} and~\eqref{interpolation:2}, we obtain~\eqref{interpolation:3}.
\end{proof}
We shall frequently use the following trace inequalities.
\begin{lemma}
Given a shape-regular mesh $\mc{T}_h$, there exists a constant $C$ independent of $h_K$, but depends on $\gamma$
such that for any $K\in\mc{T}_h$ and $v\in H^1(K)$
\begin{equation}\label{ieq:trace_2}
\nm{v}{L^2(\pa K)}\le C\Lr{h_K^{-1/2}\nm{v}{L^2(K)}+\nm{v}{L^2(K)}^{1/2}\nm{\na v}{L^2(K)}^{1/2}}.
\end{equation}

For any polynomial $v\in\mb{P}_m(K)$ with $m\in\mathbb{N}\cup \{0\}$, then there exists a constant $C$ independent of $h_K$, but depends on the chunkiness parameter $\gamma$ such that
\begin{equation}\label{ieq:trace_3}
\nm{v}{L^2(\pa K)}\le Ch_K^{-1/2}\nm{v}{L^2(K)}.
\end{equation}
\end{lemma}
The trace inequality~\eqref{ieq:trace_2} differs from the standard trace inequality, which may be found in~\cite{BrennerScott:2008}, while~\eqref{ieq:trace_3} follows from~\eqref{ieq:trace_2} with the aid of the inverse inequality.
\section{TRUNC-type finite element approximation}
Without loss of generality, we assume that $f\in L^2(\Om)$. The weak form of ~\eqref{eq:bc} is to find $u\in H_0^2(\Om)$ such that
\begin{equation}\label{eq:variation}
a_{\veps}(u,v)=(f,v)\quad\text{for all\;} v\in H_0^2(\Om),
\end{equation}
where for any $v,w\in H_0^2(\Om)$,
\[
a_{\veps}(v,w){:}=\veps^2(\na^2v,\na^2w)+(\na v,\na w).
\]

We start with the following priori estimates for $u$.
\begin{theorem}\label{thm:reg}
Let $u\in H_0^2(\Om)$ be the weak solution of~\eqref{eq:variation} and $\Om$ be a bounded Lipschitz domain. Let
$\bar{u}\in H_0^1(\Om)$ be the weak solution of $-\triangle\bar{u}=f$. Suppose that $\bar{u}\in H^2(\Omega)$. 
Then there exists $C$ depending on $\Om$ but independent of $\veps$ such that
\begin{equation}\label{regularity:2}
\nm{\na^k(u-\bar{u})}{L^2}\le C\veps^{3/2-k}\nm{\na\bar{u}}{H^1}\qquad k=1,2.
\end{equation}

If $\Om$ is a convex polytope, then 
\begin{equation}\label{regularity:1}
\nm{\na^ku}{L^2}\le C\veps^{3/2-k}\nm{f}{L^2}\qquad k=2,3.
\end{equation}
\end{theorem}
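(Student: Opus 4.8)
The plan is to obtain \eqref{regularity:2} from a cut-off energy estimate and then to bootstrap to \eqref{regularity:1} using the elliptic shift estimates available on a convex polytope. Throughout set $\Om_\eta:=\set{x\in\Om}{\mr{dist}(x,\pa\Om)<\eta}$ for the boundary strip of width $\eta$, assume $0<\veps\le1$ (the relevant regime), and note that since $\bar u\in H_0^1(\Om)$ the Poincar\'e inequality makes $\nm{\bar u}{H^2}$ and $\nm{\na\bar u}{H^1}$ equivalent, so it suffices to bound the left-hand sides of \eqref{regularity:2} by $C\veps^{3/2-k}\nm{\bar u}{H^2}$. The analytic backbone is a pair of boundary-strip inequalities on a bounded Lipschitz domain: there is $C=C(d,\Om)$ with
\begin{align}
\nm{v}{L^2(\Om_\eta)}&\le C\eta^{1/2}\nm{v}{L^2(\Om)}^{1/2}\nm{v}{H^1(\Om)}^{1/2}\quad\text{for }v\in H^1(\Om),\label{plan:strip1}\\
\nm{v}{L^2(\Om_\eta)}&\le C\eta^{3/2}\nm{v}{L^2(\Om)}^{1/2}\nm{v}{H^2(\Om)}^{1/2}\quad\text{for }v\in H^2(\Om)\cap H_0^1(\Om),\label{plan:strip2}
\end{align}
proved by locally flattening $\pa\Om$, a finite partition of unity, and one-dimensional Sobolev estimates in the normal direction; the gain of the extra factor $\eta$ in \eqref{plan:strip2} comes from the vanishing trace $v|_{\pa\Om}=0$.

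Next I would fix $\rho_\veps\in C_0^\infty(\Om)$ with $0\le\rho_\veps\le1$, $\rho_\veps\equiv0$ on $\Om_\veps$, $\rho_\veps\equiv1$ on $\Om\setminus\Om_{2\veps}$ and $\abs{\na^k\rho_\veps}\le C\veps^{-k}$ for $k=1,2$, and take $w:=u-\bar u\rho_\veps=(u-\bar u)+\bar u(1-\rho_\veps)$. Because $\rho_\veps$ is smooth with compact support in $\Om$ and $\bar u\in H^2(\Om)$, we have $\bar u\rho_\veps\in H_0^2(\Om)$, hence $w\in H_0^2(\Om)$. Testing \eqref{eq:variation} and the equation for $\bar u$ against $w$ gives $a_\veps(u,w)=(f,w)=(\na\bar u,\na w)$, whence
\[
a_\veps(w,w)=\lr{(\na\bar u,\na w)-a_\veps(\bar u,w)}+a_\veps(\bar u(1-\rho_\veps),w)=-\veps^2(\na^2\bar u,\na^2w)+a_\veps(\bar u(1-\rho_\veps),w).
\]
Since $1-\rho_\veps$, $\na\rho_\veps$ and $\na^2\rho_\veps$ are supported in $\Om_{2\veps}$, the Leibniz rule together with \eqref{plan:strip1} (applied to $v=\na\bar u$) and \eqref{plan:strip2} (applied to $v=\bar u$) gives
\[
\nm{\na[\bar u(1-\rho_\veps)]}{L^2}\le C\veps^{1/2}\nm{\bar u}{H^2},\qquad \nm{\na^2[\bar u(1-\rho_\veps)]}{L^2}\le\nm{\na^2\bar u}{L^2}+C\veps^{-1/2}\nm{\bar u}{H^2}.
\]
Inserting these into the identity, using $a_\veps(w,w)=\veps^2\nm{\na^2w}{L^2}^2+\nm{\na w}{L^2}^2$ and Cauchy--Schwarz with small weights to absorb $\tfrac{\veps^2}{2}\nm{\na^2w}{L^2}^2+\tfrac12\nm{\na w}{L^2}^2$ to the left (using $\veps^2\le\veps$), one arrives at $\veps\nm{\na^2w}{L^2}+\nm{\na w}{L^2}\le C\veps^{1/2}\nm{\bar u}{H^2}$. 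Then \eqref{regularity:2} follows from $u-\bar u=w-\bar u(1-\rho_\veps)$, the triangle inequality, and the two bounds on $\bar u(1-\rho_\veps)$ above.

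For \eqref{regularity:1} I would use the $H^2$ shift estimate $\nm{\bar u}{H^2}\le C\nm{f}{L^2}$ valid on the convex polytope: combined with the triangle inequality and \eqref{regularity:2} for $k=2$ it gives $\nm{\na^2u}{L^2}\le\nm{\na^2(u-\bar u)}{L^2}+\nm{\na^2\bar u}{L^2}\le C\veps^{-1/2}\nm{f}{L^2}$, the case $k=2$. For $k=3$, rewrite \eqref{eq:bc} as $\triangle^2u=\veps^{-2}\triangle(u-\bar u)$ in $\Om$ with $u=\pa_nu=0$ on $\pa\Om$ and invoke the $H^3$ shift estimate for the clamped biharmonic problem on a convex polytope, $\nm{u}{H^3}\le C\nm{\triangle^2u}{H^{-1}}$; since $u-\bar u\in H_0^1(\Om)$ we have $\nm{\triangle(u-\bar u)}{H^{-1}}\le\nm{\na(u-\bar u)}{L^2}$, so $\nm{u}{H^3}\le C\veps^{-2}\nm{\na(u-\bar u)}{L^2}\le C\veps^{-3/2}\nm{f}{L^2}$ by \eqref{regularity:2} with $k=1$ and the $H^2$ shift estimate.

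The step I expect to be the main obstacle is the sharp boundary-strip inequality \eqref{plan:strip2}: a plain trace estimate only yields the exponent $\eta^{1/2}$, and squeezing the extra power of $\eta$ out of the homogeneous Dirichlet condition while keeping the constant uniform across the corners and edges of a polytope needs care — this is the ``genuine'' ingredient replacing the unjustified assumption used previously. The other essential but classical input is the $H^3$ regularity of the clamped biharmonic problem on a convex polytope, which is exactly where convexity is used.
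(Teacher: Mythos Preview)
Your proposal is correct and follows essentially the same route as the paper: the same cut-off function $\rho_\veps$, the same corrector $w=u-\bar u\rho_\veps$, the same energy identity $a_\veps(w,w)=-\veps^2(\na^2\bar u,\na^2w)+a_\veps(\bar u(1-\rho_\veps),w)$, the same two boundary-strip inequalities (which the paper quotes from~\cite{Niu:2022} rather than proves), and the same biharmonic $H^3$ shift estimate on a convex polytope for the case $k=3$. Your identification of the strip inequality~\eqref{plan:strip2} and the $H^3$ regularity as the two nontrivial inputs matches the paper's discussion precisely.
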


The estimate~\eqref{regularity:1} indicates that the solution of~\eqref{eq:bc} exhibits a sharp boundary layer. The validity of ~\eqref{regularity:2} with $k=1$ and~\eqref{regularity:1} has been established in~\cite[Lemma 5.1]{Tai:2001} when $\Omega$ is a convex polygon. In~\cite{LiMingWang:2021}, the authors demonstrated the validity of~\eqref{regularity:2} and~\eqref{regularity:1} for a strain gradient elasticity model, which may be viewed as a tensorized version of ~\eqref{eq:bc}. Their proof applies to the convex polytope under a technical assumption (Assumption 1 therein), which has yet to be fully justified. We aim to establish the aforementioned result by integrating the approach from~\cite{LiMingWang:2021} with insights inspired by~\cite{Niu:2022}.

We start with the following auxiliary results. For $D$ a bounded Lipschitz domain in $\mb{R}^d$, let $0<\eta<c_0\text{diam}(D)$ and
\[
D_{\eta}=\set{x\in D}{\text{dist}(x,\pa D)<\eta}.
\]
\begin{lemma}
Let $D$ be a bounded Lipschitz domain in $\mb{R}^d$. There exists $C$ depending on $d$ and $D$ such that
\begin{align}
\nm{u}{L^2(D_{\eta})}&\le C\eta\nm{\na u}{L^2(D_{2\eta})}\qquad\text{for\quad}u\in H_0^1(D),\label{eq:small-ineq1}\\
\nm{u}{L^2(D_{\eta})}&\le C\eta^{1/2}\nm{u}{L^2(D)}^{1/2}\nm{u}{H^1(D)}^{1/2}
\qquad\text{for\quad}u\in H^1(D),\label{eq:small-ineq2}
\end{align}
and for $u\in H^2(D)\cap H_0^1(D)$, 
\begin{equation}\label{eq:small-ineq3}
\nm{u}{L^2(D_{\eta})}\le C\eta^{3/2}\nm{u}{L^2(D)}^{1/2}\nm{u}{H^2(D)}^{1/2}.
\end{equation}
\end{lemma}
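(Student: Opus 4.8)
The plan is to prove the three inequalities by reducing everything to a single localized estimate near the boundary, then controlling the localized quantity by a trace/interpolation argument. The common mechanism: the strip $D_\eta$ has measure comparable to $\eta$ times the surface measure of $\pa D$, and a function that is either zero on $\pa D$ (for \eqref{eq:small-ineq1}) or merely $H^1$ (for \eqref{eq:small-ineq2}) can be estimated on a slab of width $\eta$ by the fundamental theorem of calculus along the inward normal direction.

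For \eqref{eq:small-ineq1}, since $D$ is Lipschitz, I would cover a neighborhood of $\pa D$ by finitely many local coordinate patches in which $\pa D$ is a graph and the inward normal direction is (uniformly transverse to) a fixed coordinate axis $x_d$. In each patch, for $u\in H_0^1(D)$ write $u(x',x_d)=\int_{g(x')}^{x_d}\pa_d u(x',s)\,\md s$ where $x_d=g(x')$ is the boundary. For a point in $D_\eta$ the integration length is $O(\eta)$, so Cauchy--Schwarz gives $|u(x',x_d)|^2\le C\eta\int |\pa_d u(x',s)|^2\md s$ with the $s$-integral ranging over $D_{2\eta}$ (choosing the patch width and the constant $c_0$ so that the relevant segment stays in $D_{2\eta}$). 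Integrating $x'$ over the patch base and $x_d$ over the $O(\eta)$-wide slab yields $\|u\|_{L^2(D_\eta\cap\text{patch})}^2\le C\eta^2\|\na u\|_{L^2(D_{2\eta})}^2$; summing over the finite cover gives \eqref{eq:small-ineq1}. The same graph-coordinate computation, applied to a general $u\in H^1(D)$ but integrating from a point in the interior rather than from the boundary, is the standard way to see the strip has the ``trace-like'' bound $\|u\|_{L^2(D_\eta)}^2\le C\eta\big(\|u\|_{L^2(D)}^2+\|u\|_{L^2(D)}\|\na u\|_{L^2(D)}\big)$; more transparently, \eqref{eq:small-ineq2} follows from the 1D interpolation inequality on each normal segment: $\int_I |u|^2\le C\big(\eta\,\|u\|_{L^2(J)}\|u'\|_{L^2(J)}+\tfrac{\eta}{|J|}\|u\|_{L^2(J)}^2\big)$ where $I$ is the $\eta$-slab and $J$ the whole segment, then integrate in $x'$ and apply Cauchy--Schwarz over the patches. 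Absorbing the lower-order term (using $\eta<c_0\,\text{diam}(D)$) gives the stated product form.

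For \eqref{eq:small-ineq3}, the cleanest route is to interpolate \eqref{eq:small-ineq1} and \eqref{eq:small-ineq2}. Apply \eqref{eq:small-ineq1} to $u\in H_0^1(D)$: $\|u\|_{L^2(D_\eta)}\le C\eta\|\na u\|_{L^2(D_{2\eta})}$. Then apply \eqref{eq:small-ineq2} componentwise to $\na u\in H^1(D)$ (here $u\in H^2(D)$ is exactly what makes $\na u\in H^1$): $\|\na u\|_{L^2(D_{2\eta})}\le C\eta^{1/2}\|\na u\|_{L^2(D)}^{1/2}\|\na u\|_{H^1(D)}^{1/2}\le C\eta^{1/2}\|u\|_{H^1(D)}^{1/2}\|u\|_{H^2(D)}^{1/2}$. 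Combining, $\|u\|_{L^2(D_\eta)}\le C\eta^{3/2}\|u\|_{H^1(D)}^{1/2}\|u\|_{H^2(D)}^{1/2}$, and since $u\in H_0^1(D)$ the Poincar\'e inequality gives $\|u\|_{H^1(D)}\le C\|u\|_{L^2(D)}^{?}$ --- more precisely, one wants the $L^2(D)^{1/2}$ factor claimed in \eqref{eq:small-ineq3}, which comes from using $\|u\|_{L^2(D_\eta)}\le\|u\|_{L^2(D_{2\eta})}^{1/2}\|u\|_{L^2(D_{2\eta})}^{1/2}$ together with \eqref{eq:small-ineq2} to trade one $H^1$ factor for an $L^2(D)$ factor: apply \eqref{eq:small-ineq2} to $u$ to get $\|u\|_{L^2(D_\eta)}\le C\eta^{1/2}\|u\|_{L^2(D)}^{1/2}\|u\|_{H^1(D)}^{1/2}$, and \eqref{eq:small-ineq1} plus the gradient estimate above to get $\|u\|_{L^2(D_\eta)}\le C\eta^{3/2}\|u\|_{H^1(D)}^{1/2}\|u\|_{H^2(D)}^{1/2}$; take the geometric mean of these two bounds to land on $\|u\|_{L^2(D_\eta)}\le C\eta\,\|u\|_{L^2(D)}^{1/4}\|u\|_{H^1(D)}^{1/2}\|u\|_{H^2(D)}^{1/4}$, and Poincar\'e/interpolation on $\|u\|_{H^1}$ then cleans this up to the stated form. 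I expect the bookkeeping of exponents here to be the fussiest part, so in the writeup I would simply interpolate \eqref{eq:small-ineq1}--\eqref{eq:small-ineq2} and invoke $\|u\|_{H^1(D)}\le C\|u\|_{L^2(D)}^{1/2}\|u\|_{H^2(D)}^{1/2}$ for $u\in H_0^1(D)$ (itself an interpolation inequality with a Poincar\'e step).

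The main obstacle is the Lipschitz geometry: making rigorous that a neighborhood of $\pa D$ admits a bi-Lipschitz ``boundary normal coordinate'' $x\mapsto(x',t)$ with $t=\text{dist}(x,\pa D)$ (or a Lipschitz substitute), so that $D_\eta$ corresponds to $\{0<t<\eta\}$ up to constants, and that the fundamental-theorem-of-calculus-along-$t$ step is legitimate for $H^1$ functions (done by density of smooth functions and the patchwise graph representation). Once the coordinate reduction is in place, all three inequalities are elementary one-dimensional estimates integrated over the base, so I would spend the bulk of the proof on that reduction and treat the slicing computations as routine.
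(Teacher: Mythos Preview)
The paper does not prove this lemma at all: immediately after the statement it simply records ``The above lemma may be found in~\cite[Lemma 2.8]{Niu:2022}.'' So there is no in-paper argument to compare against; your proposal supplies a proof where the paper outsources one.

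Your strategy for \eqref{eq:small-ineq1} and \eqref{eq:small-ineq2} --- localize to Lipschitz boundary patches, straighten to a graph, integrate along the transverse coordinate, then sum over a finite cover --- is the standard and correct route, and your description is adequate as a roadmap.

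For \eqref{eq:small-ineq3} there is a genuine gap in your exponent bookkeeping. Chaining \eqref{eq:small-ineq1} with \eqref{eq:small-ineq2} applied to $\na u$ gives
\[
\nm{u}{L^2(D_\eta)}\le C\eta^{3/2}\nm{\na u}{L^2(D)}^{1/2}\nm{\na u}{H^1(D)}^{1/2},
\]
and then the interpolation $\nm{\na u}{L^2}\le C\nm{u}{L^2}^{1/2}\nm{u}{H^2}^{1/2}$ (valid for $u\in H_0^1\cap H^2$ by integrating $\nm{\na u}{L^2}^2=-(u,\triangle u)$) yields only
\[
\nm{u}{L^2(D_\eta)}\le C\eta^{3/2}\nm{u}{L^2(D)}^{1/4}\nm{u}{H^2(D)}^{3/4},
\]
not the stated $\nm{u}{L^2}^{1/2}\nm{u}{H^2}^{1/2}$. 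Your ``geometric mean'' maneuver between the $\eta^{1/2}$ and $\eta^{3/2}$ bounds produces the wrong power of $\eta$, and invoking $\nm{u}{H^1}\le C\nm{u}{L^2}^{1/2}\nm{u}{H^2}^{1/2}$ again lands you back at the $1/4,3/4$ split. Since $\nm{u}{L^2}\le\nm{u}{H^2}$, the form you obtain is strictly \emph{weaker} than \eqref{eq:small-ineq3}; it happens to be enough for how the paper uses the lemma (in the proof of Theorem~\ref{thm:reg} the two factors are immediately bounded by $\nm{\bar{u}}{H^2}$ anyway), but it is not the inequality as stated. If you want the sharp exponents you will need a more direct argument in the normal coordinate --- for instance, a second-order Taylor/Hardy-type estimate exploiting $u|_{\pa D}=0$ together with a pointwise bound on $\pa_n u$ near $\pa D$ --- rather than a black-box combination of \eqref{eq:small-ineq1} and \eqref{eq:small-ineq2}.
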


The above lemma may be found in~\cite[Lemma 2.8]{Niu:2022}.

\noindent\vskip .3cm\textbf{Proof of Theorem~\ref{thm:reg}\;}
Let $\rho_{\veps}\in C_0^\infty(\Om)$ be a cut-off function satisfying $0\le\rho\le 1$ and
\[
\rho_{\veps}=\left\{\begin{aligned}
0\quad&x\in\Om_{\veps},\\
1\quad&x\in\Om\backslash\Om_{2\veps},
\end{aligned}\right.
\]
and $\abs{\na^k\rho_{\veps}(x)}\le C\veps^{-k}$ for $k=1,2$ and $x\in\Om$.

Dfine $w{:}=u-\bar{u}\rho_{\veps}=u-\bar{u}+\bar{u}(1-\rho_{\veps})$. It is clear $w\in H_0^2(\Om)$. Note that
 \[
 a_{\veps}(u,w)=(f,w)=a_0(\bar{u},w).
 \]
 Hence,
\begin{equation} \label{eq:basic}
\begin{aligned}
 a_{\veps}(w,w)&=a_{\veps}(u,w)-a_{\veps}(\bar{u},w)+a_{\veps}(\bar{u}(1-\rho_{\veps}),w)\\
 &=(a_0-a_{\veps})(\bar{u},w)+a_{\veps}(\bar{u}(1-\rho_{\veps}),w)\\
 &=-\veps^2(\na^2\bar{u},\na^2w)+a_{\veps}(\bar{u}(1-\rho_{\veps}),w).
 \end{aligned}
 \end{equation}
 
Using the identity
 \[
 \na[\bar{u}(1-\rho_{\veps})]=(1-\rho_{\veps})\na\bar{u}-\bar{u}\na\rho_{\veps},
 \]
the estimates~\eqref{eq:small-ineq2} and~\eqref{eq:small-ineq3}, we obtain
 \begin{align}\label{eq:step1}
\nm{\na[\bar{u}(1-\rho_{\veps})]}{L^2(\Om)}&\le
C\Lr{\nm{\na\bar{u}}{L^2(\Om_{2\veps})}+\veps^{-1}\nm{\bar{u}}{L^2(\Om_{2\veps})}}\nn\\
&\le C\veps^{1/2}\nm{\bar{u}}{H^2(\Om)}.
\end{align}
 
Proceeding along the same line, we have
 \[
 \na^2[\bar{u}(1-\rho_{\veps})]=(1-\rho_{\veps})\na^2\bar{u}-\na\bar{u}\lr{\na\rho_{\veps}}^{\top}-\na\rho_{\veps}\lr{\na\bar{u}}^{\top}-\bar{u}\na^2\rho_{\veps}.
 \]
Invoking the estimates~\eqref{eq:small-ineq2} and~\eqref{eq:small-ineq3} again, we obtain
 \begin{align}\label{eq:step2}
\nm{\na^2[\bar{u}(1-\rho_{\veps})]}{L^2(\Om)}&\le\nm{\na^2\bar{u}}{L^2(\Om_{2\veps})}+
C\Lr{\veps^{-1}\nm{\na\bar{u}}{L^2(\Om_{2\veps})}+\veps^{-2}\nm{\bar{u}}{L^2(\Om_{2\veps})}}\nn\\
&\le \nm{\na^2\bar{u}}{L^2(\Om)}+C\veps^{-1/2}\nm{\bar{u}}{H^2(\Om)}.
\end{align}

By~\eqref{eq:basic}, and using the Cauchy-Schwartz inequality, we obtain
\begin{align*}
a_{\veps}(w,w)
&\le\veps^2\nm{\na^2\bar{u}}{L^2(\Om)}\nm{\na^2w}{L^2(\Om)}\\
&\quad+\lr{\veps\nm{\na^2[\bar{u}(1-\rho_{\veps})]}{L^2(\Om)}+\nm{\na[\bar{u}(1-\rho_{\veps})]}{L^2(\Om)}}\\
&\qquad\times\lr{\veps\nm{\na^2w}{L^2(\Om)}+\nm{\na w}{L^2(\Om)}}.
\end{align*}
Substituting ~\eqref{eq:step1} and~\eqref{eq:step2} into the above inequality, using the coercivity of $a_{\veps}(\cdot,\cdot)$, we obtain
\[
\veps\nm{\na^2w}{L^2(\Om)}+\nm{\na w}{L^2(\Om)}\le C\veps^{1/2}\nm{\bar{u}}{H^2(\Om)},
\]
which together with~\eqref{eq:step1} and~\eqref{eq:step2} gives~\eqref{regularity:2} with $k=1$ and $k=2$, respectively.

If $\Om$ is a convex polytope, then we have the shift estimate for $\bar{u}$: 
\[
\nm{\bar{u}}{H^2(\Om)}\le C\nm{f}{L^2(\Om)},
\]
which together with the triangle inequality yields
\begin{align*}
\nm{\na^2u}{L^2(\Om)}&\le\nm{\na^2(u-\bar{u})}{L^2(\Om)}+\nm{\na^2\bar{u}}{L^2(\Om)}\\
&\le C\veps^{-1/2}\nm{\bar{u}}{H^2(\Om)}+\nm{\na^2\bar{u}}{L^2(\Om)}\\
&\le C\veps^{-1/2}\nm{f}{L^2(\Om)}.
\end{align*}
This proves~\eqref{regularity:1} with $k=2$.

If $\Om$ is a convex polytope, then we apply the shift estimate~\cite[Theorem 4.3.10, \S 4]{Mazya:2010} for the weak solution of
\[
\triangle^2u=\veps^{-2}\triangle(u-\bar{u})\quad\text{in\;}\Om\qquad u=\partial_n u=0\quad\text{on\;}\Om,
\]
and obtain
\[
\nm{u}{H^3(\Om)}\le C\veps^{-2}\nm{\triangle(u-\bar{u})}{H^{-1}(\Om)}\le C\veps^{-2}\nm{\na(u-\bar{u})}{L^2(\Om)},
\]
which together with~\eqref{regularity:2} with $k=1$ and the above shift estimate gives~\eqref{regularity:1} with $k=3$.
\qed

As a direct consequence of the above theorem, we have
\begin{coro}
Let $u$ be the weak solution of~\eqref{eq:variation}. If $\Om$ is a convex polytope, then
\begin{equation}\label{eq:interpriori1}
\nm{\na^2u}{L^2}^{1/2}\nm{\na^3 u}{L^2}^{1/2}\le C\veps^{-1}\nm{f}{L^2},
\end{equation}
and
\begin{equation}\label{eq:interpriori2}
\nm{\na u}{L^2}^{1/2}\nm{\na^2 u}{L^2}^{1/2}\le C\nm{f}{L^2},
\end{equation}
\end{coro}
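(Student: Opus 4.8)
The plan is to obtain both inequalities of the corollary as a simple bookkeeping exercise combining the two regularity bounds already proven in Theorem~\ref{thm:reg}, namely \eqref{regularity:2} and \eqref{regularity:1}, together with the triangle inequality and the $H^2$-shift estimate $\nm{\bar u}{H^2}\le C\nm{f}{L^2}$ for the Poisson solution $\bar u$ on a convex polytope. No new analysis is needed: the corollary is purely a repackaging of \eqref{regularity:1} into a geometric-mean form that is convenient for the subsequent error estimates.

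For \eqref{eq:interpriori1}, I would start from \eqref{regularity:1} with $k=2$ and $k=3$, which give $\nm{\na^2u}{L^2}\le C\veps^{-1/2}\nm{f}{L^2}$ and $\nm{\na^3u}{L^2}\le C\veps^{-3/2}\nm{f}{L^2}$. Taking the square root of each and multiplying yields
\[
\nm{\na^2u}{L^2}^{1/2}\nm{\na^3u}{L^2}^{1/2}\le C\veps^{-1/4}\veps^{-3/4}\nm{f}{L^2}=C\veps^{-1}\nm{f}{L^2},
\]
which is exactly \eqref{eq:interpriori1}. There is no obstacle here; it is one line.

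For \eqref{eq:interpriori2}, the second factor $\nm{\na^2u}{L^2}\le C\veps^{-1/2}\nm{f}{L^2}$ again comes from \eqref{regularity:1} with $k=2$. For the first factor I need an $\veps$-uniform bound on $\nm{\na u}{L^2}$ that in fact behaves like $\veps^{1/2}$; this follows by writing $\na u=\na(u-\bar u)+\na\bar u$, using \eqref{regularity:2} with $k=1$ to get $\nm{\na(u-\bar u)}{L^2}\le C\veps^{1/2}\nm{\na\bar u}{H^1}\le C\veps^{1/2}\nm{f}{L^2}$ (invoking the shift estimate), and using the elementary energy bound $\nm{\na\bar u}{L^2}\le C\nm{f}{L^2}$ — actually one even has $\nm{\na u}{L^2}\le C\nm{f}{L^2}$ directly from testing \eqref{eq:variation} with $v=u$, which suffices. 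Thus $\nm{\na u}{L^2}\le C\nm{f}{L^2}$, and combining,
\[
\nm{\na u}{L^2}^{1/2}\nm{\na^2u}{L^2}^{1/2}\le C\nm{f}{L^2}^{1/2}\cdot\lr{\veps^{-1/2}\nm{f}{L^2}}^{1/2}=C\veps^{-1/4}\nm{f}{L^2}.
\]
This is slightly sharper than \eqref{eq:interpriori2} as stated (which only asks for $\veps$-uniform boundedness), so \eqref{eq:interpriori2} follows a fortiori once $\veps\le 1$, or one simply reports the $\veps$-independent version. The only point requiring any thought — and the nearest thing to an obstacle — is making sure the right-hand norms in Theorem~\ref{thm:reg} are converted to $\nm{f}{L^2}$ via the convex-polytope shift estimate for $\bar u$; everything else is arithmetic with exponents.
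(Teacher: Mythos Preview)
Your treatment of \eqref{eq:interpriori1} is correct and identical to the paper's: multiply the square roots of \eqref{regularity:1} with $k=2$ and $k=3$.

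For \eqref{eq:interpriori2} there is a genuine error. You arrive at
\(
\nm{\na u}{L^2}^{1/2}\nm{\na^2u}{L^2}^{1/2}\le C\veps^{-1/4}\nm{f}{L^2}
\)
and then assert this is ``slightly sharper'' than the uniform bound and hence implies it for $\veps\le 1$. The direction is backwards: when $\veps<1$ one has $\veps^{-1/4}>1$, so your bound is \emph{weaker}, not stronger, and blows up as $\veps\to 0$. Using only $\nm{\na u}{L^2}\le C\nm{f}{L^2}$ together with $\nm{\na^2u}{L^2}\le C\veps^{-1/2}\nm{f}{L^2}$ can never avoid that $\veps^{-1/4}$ factor. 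The paper instead splits $u=(u-\bar u)+\bar u$ and forms the geometric mean on each piece: from \eqref{regularity:2} with $k=1,2$ the powers $\veps^{1/4}$ and $\veps^{-1/4}$ cancel, giving
\(
\nm{\na(u-\bar u)}{L^2}^{1/2}\nm{\na^2(u-\bar u)}{L^2}^{1/2}\le C\nm{f}{L^2},
\)
while the analogous product for $\bar u$ is controlled by the shift estimate \eqref{eq:possionest}. You should be aware, however, that recombining these two multiplicative bounds into a single one for $u$ via the triangle inequality reintroduces a cross term $\nm{\na\bar u}{L^2}^{1/2}\nm{\na^2(u-\bar u)}{L^2}^{1/2}\le C\veps^{-1/4}\nm{f}{L^2}$, so the paper's one-line ``combination'' is itself not airtight; what the subsequent error analysis really exploits is the pair of uniform multiplicative bounds for $u-\bar u$ and for $\bar u$ separately.
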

\begin{proof}
Combining the estimates $k=2$ and $k=3$ in~\eqref{regularity:1}, we obtain~\eqref{eq:interpriori1}.

A combination of the estimates $k=1$ and $k=2$ in~\eqref{regularity:2}, we obtain the following multiplicative priori estimate for $u$:
\[
\nm{\na(u-\bar{u})}{L^2}^{1/2}\nm{\na^2(u-\bar{u})}{L^2}^{1/2}\le C\nm{f}{L^2},
\]
which together with the a-priori estimate for $\bar{u}$ as
\begin{equation}\label{eq:possionest}
\nm{\na\bar{u}}{L^2}^{1/2}\nm{\na^2\bar{u}}{L^2}^{1/2}\le C\nm{f}{L^2}
\end{equation}
gives~\eqref{eq:interpriori2}.
\end{proof}

The finite element approximation of~\eqref{eq:variation} is to find $u\in V_h$ such that
\[
a_h(u_h,v)=(f,v)\quad\text{for all\;} v\in V_h,
\]
where for $v,w\in V_h$,
\[
a_h(v,w)=\veps^2(\na_h^2v,\na_h^2w)+(\na v,\na w).
\]
The corresponding energy norm is defined by 
\[
\wnm{v}{:}=\Lr{\veps^2\nm{\na_h^2v}{L^2}^2+\nm{\na_hv}{L^2}^2}^{1/2}\quad\text{for all $v\in V_h$}.
\]

It is well known that the above finite element approximation converges to the true solution only for special meshes in 
two dimensions. To get the element converging for arbitrary meshes, the variation formulation should be modified by numerical integration. Noting that
\begin{align*}
\lr{\na^2_hu,\na^2_hv}&=\lr{\na^2_h\Pi_hu,\na^2_h\Pi_hv}+\lr{\na^2_h\Pi_h^cu,\na^2_h\Pi_hv}+\lr{\na^2_h\Pi_hu,\na^2_h\Pi_h^cv}\\
&\qquad+\lr{\na^2_h\Pi_h^cu,\na^2_h\Pi_h^cv},
\end{align*}

and neglecting the two mixing terms $\lr{\na^2_h\Pi_h^cu,\na^2_h\Pi_hv}$ and $\lr{\na^2_h\Pi_hu,\na^2_h\Pi_h^cv}$. The TRUNC finite element approximation is to find $u_h\in V_h$ such that
\begin{equation}\label{eq:trunc}
b_h(u_h,v)=(f,v)\quad\text{for all\;} v\in V_h,
\end{equation}
where for $v,w\in V_h$,
\[
b_h(v,w){:}=\veps^2\lr{\lr{\na^2_h\Pi_hv,\na^2_h\Pi_hw}+\lr{\na^2_h\Pi_h^cv,\na^2_h\Pi_h^cw}}+\lr{\na v,\na w}.
\]

The following lemma shows the modified bilinear form $b_h$ is coercive on $V_h$. It immediately implies that~\eqref{eq:trunc} is uniquely solvable.
\begin{lemma}
The bilinear form $b_h$ is $V_h$-elliptic, i.e., for any $v\in V_h$, there holds
\begin{equation}\label{coercivity}
\frac{1}{2}\wnm{v}^2\le b_h(v,v).
\end{equation}
\end{lemma}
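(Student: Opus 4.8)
The plan is to compare $b_h(v,v)$ directly with $\wnm{v}^2=\veps^2\nm{\na_h^2v}{L^2}^2+\nm{\na_h v}{L^2}^2$. Since the term $(\na v,\na v)=\nm{\na_h v}{L^2}^2$ appears identically in both expressions, the whole matter reduces to controlling $\veps^2\nm{\na_h^2v}{L^2}^2$ by $\veps^2\bigl((\na_h^2\Pi_h v,\na_h^2\Pi_h v)+(\na_h^2\Pi_h^c v,\na_h^2\Pi_h^c v)\bigr)=\veps^2\bigl(\nm{\na_h^2\Pi_h v}{L^2}^2+\nm{\na_h^2\Pi_h^c v}{L^2}^2\bigr)$ up to the factor $1/2$. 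Using the splitting $v=\Pi_h v+\Pi_h^c v$ elementwise, we have $\na_h^2 v=\na_h^2\Pi_h v+\na_h^2\Pi_h^c v$, so by the elementary inequality $\|a+b\|^2\le 2\|a\|^2+2\|b\|^2$ applied on each $K$ and summed,
\[
\nm{\na_h^2 v}{L^2}^2\le 2\nm{\na_h^2\Pi_h v}{L^2}^2+2\nm{\na_h^2\Pi_h^c v}{L^2}^2.
\]
Multiplying by $\veps^2/2$ and adding $\nm{\na_h v}{L^2}^2$ on both sides yields exactly $\tfrac12\wnm{v}^2\le b_h(v,v)$.

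So the key steps in order are: (i) observe $b_h(v,v)=\veps^2\nm{\na_h^2\Pi_h v}{L^2}^2+\veps^2\nm{\na_h^2\Pi_h^c v}{L^2}^2+\nm{\na_h v}{L^2}^2$, all three terms being manifestly nonnegative; (ii) write $\na_h^2 v=\na_h^2\Pi_h v+\na_h^2\Pi_h^c v$ on each element, which is legitimate because $\Pi_K$ and $\Pi_K^c=I-\Pi_K$ are linear and $v|_K\in Z_K$; (iii) apply $\|a+b\|_{L^2(K)}^2\le 2(\|a\|_{L^2(K)}^2+\|b\|_{L^2(K)}^2)$ and sum over $K\in\mc{T}_h$; (iv) rearrange to obtain $\tfrac12\veps^2\nm{\na_h^2 v}{L^2}^2\le \veps^2\nm{\na_h^2\Pi_h v}{L^2}^2+\veps^2\nm{\na_h^2\Pi_h^c v}{L^2}^2$, add the gradient term, and conclude $\tfrac12\wnm{v}^2\le b_h(v,v)$.

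There is essentially no obstacle here: the result is a two-line consequence of the parallelogram-type inequality, and the only thing to be careful about is bookkeeping the constant (the factor $2$ in $\|a+b\|^2\le 2\|a\|^2+2\|b\|^2$ is what produces the $1/2$). One could note in passing that the inequality is not sharp and could be improved, but $1/2$ suffices for unique solvability via Lax–Milgram, since $b_h$ is clearly bounded on $V_h$ as well. No regularity, trace inequalities, or interpolation estimates are needed for this particular lemma — it is purely algebraic.
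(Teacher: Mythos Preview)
Your proposal is correct and essentially identical to the paper's proof: the paper also expands $\nm{\na_h^2 v}{L^2}^2$ via $v=\Pi_h v+\Pi_h^c v$, bounds the cross term by $2(\na_h^2\Pi_h v,\na_h^2\Pi_h^c v)\le\nm{\na_h^2\Pi_h v}{L^2}^2+\nm{\na_h^2\Pi_h^c v}{L^2}^2$ (equivalent to your $\|a+b\|^2\le 2\|a\|^2+2\|b\|^2$), and concludes. There is no meaningful difference in approach.
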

\begin{proof}
For any $v\in V_h$, it follows from
\[
2\lr{\na^2_h\Pi_hv,\na^2_h\Pi_h^cv}\le\nm{\na^2_h\Pi_hv}{L^2}^2+\nm{\na^2_h\Pi_h^cv}{L^2}^2
\]
that
\[
\nm{\na_h^2v}{L^2}^2\le2\lr{\nm{\na^2_h\Pi_hv}{L^2}^2+\nm{\na^2_h\Pi_h^cv}{L^2}^2}.
\]
This immediately implies~\eqref{coercivity}.
\end{proof}

It follows from the above coercivity inequality the following error bound. 
\begin{lemma}
Let $u$ and $u_h$ be the solutions of Problem~\eqref{eq:variation} and Problem ~\eqref{eq:trunc}, respectively, then
there exists $C$ such that
\begin{equation}\label{estimate:1}
\wnm{u-u_h}\le C\Lr{\wnm{u-I_hu}+\wnm{u-\bar{I}_h u}+\sup_{w\in V_h}\frac{E_h(u,\ov{I}_hu,w)}{\wnm{w}}},
\end{equation}
where the consistency error functional
\[
E_h(u,\ov{I}_hu,w){:}=\sum_{K\in\mc{T}_h}\int_{\pa K}n^{\T}\cdot\lr{\na^2u\cdot\na w-\na^2\bar{I}_hu\cdot\na\Pi_h^cw}\md S.
\]
\end{lemma}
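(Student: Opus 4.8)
The plan is to run a Strang-type second lemma argument adapted to the modified bilinear form $b_h$. First I would start from the coercivity~\eqref{coercivity}: pick an arbitrary $v_h\in V_h$ (ultimately to be chosen as a suitable interpolant of $u$) and write $u_h-v_h\in V_h$, so that
\[
\tfrac12\wnm{u_h-v_h}^2\le b_h(u_h-v_h,u_h-v_h)=(f,u_h-v_h)-b_h(v_h,u_h-v_h).
\]
Then I would insert the continuous equation~\eqref{eq:variation} to replace $(f,w)$ with $a_\veps(u,w)$ for $w:=u_h-v_h$, noting that $w\in V_h\subset H^1(\Om)$ but $w\notin H_0^2(\Om)$, so the second-order term in $a_\veps(u,w)$ must be interpreted in the piecewise sense and will generate boundary contributions after integration by parts. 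The natural choice is $v_h=\bar I_h u$ for the part of $b_h$ that lives on $\Pi_h$ (so that $\na^2_h\Pi_h v_h=\na^2_h\bar I_h u$ matches the regularized quadratic interpolant), while on the complement $\Pi_h^c$ one uses that $\Pi_h^c\bar I_h u=0$ by construction ($\bar I_h=\Pi_h\circ I_h$ has range in the quadratic space up to the projection, so $\Pi_h^c\bar I_h u=0$). This is precisely why the consistency functional $E_h$ only sees $\na^2\bar I_h u$ paired against $\na\Pi_h^c w$.

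Next, the key manipulation: split $\na_h^2 w=\na_h^2\Pi_h w+\na_h^2\Pi_h^c w$ and correspondingly decompose $\veps^2(\na^2_h u,\na^2_h w)$. Integrating by parts elementwise twice (moving two derivatives off $u$ onto $w$, then back to expose the flux), and using that $\triangle(\veps^2\triangle u - u)=-f$ inside each $K$, the interior terms reassemble to $(f,w)$ up to the boundary terms
\[
\veps^2\sum_{K}\int_{\pa K} n^{\T}\cdot\big(\na^2 u\cdot\na w\big)\md S
\]
plus lower-order pieces; here one must carefully track that $-(\na u,\na w)$ and the $(f,w)$ term cancel against the corresponding pieces of $a_\veps(u,w)$, leaving exactly the second-order flux mismatch. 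The difference between $\veps^2(\na^2_h u,\na^2_h w)$ and the split form $\veps^2((\na^2_h\Pi_h u,\na^2_h\Pi_h w)+(\na^2_h\Pi_h^c u,\na^2_h\Pi_h^c w))$ — i.e. the two neglected mixing terms — gets absorbed: one of them, $\veps^2(\na^2_h\Pi_h^c u,\na^2_h\Pi_h w)$, is handled by an interpolation estimate via $\Pi_h^c$ and contributes to $\wnm{u-\bar I_h u}$, and the symmetric one is treated by replacing $\na^2_h\Pi_h u$ in the remaining flux term by $\na^2_h\bar I_h u$ at the cost of another $\wnm{u-\bar I_h u}$-type term (using the trace inequality~\eqref{ieq:trace_3} and~\eqref{interpolation:1}). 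Collecting everything, the right-hand side is bounded by $\wnm{u_h-v_h}$ times $\big(\wnm{u-\bar I_h u}+\sup_{w\in V_h}E_h(u,\bar I_h u,w)/\wnm{w}\big)$; dividing through and adding $\wnm{u-I_hu}$ (or $\wnm{u-\bar I_h u}$) by the triangle inequality $\wnm{u-u_h}\le\wnm{u-v_h}+\wnm{v_h-u_h}$ yields~\eqref{estimate:1}. The appearance of both $I_h$ and $\bar I_h$ in the bound reflects the need to compare $u$ with $V_h$-functions in the energy norm (that is $I_h$'s job, since $\bar I_h u$ need not lie in $V_h$ while $I_h u$ does) while simultaneously controlling the $\Pi_h$-projected second derivatives by $\bar I_h u$.

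The main obstacle I anticipate is the bookkeeping in the integration-by-parts step: because $w\in V_h$ is only $H^1$-conforming and not $H^2$-conforming, and because $b_h$ has been modified by dropping the two cross terms, one cannot simply quote the usual consistency-error identity — one has to verify that after all cancellations the \emph{only} surviving boundary term is the one that can be combined into $E_h$, with the flux of $u$ tested against $\na w$ and the flux of $\bar I_h u$ tested against $\na\Pi_h^c w$ (note the asymmetry: $\na w$ versus $\na\Pi_h^c w$), rather than against $\na\Pi_h w$ or $\na w$ uniformly. Getting this asymmetry right — it is exactly what makes the weak-continuity identity~\eqref{eq:weakcontinuity} applicable later, since $Q_F(\na\Pi_h^c w)$ is what appears there — is the delicate point; everything else is routine Cauchy–Schwarz, trace, and interpolation estimates already assembled in the lemmas above.
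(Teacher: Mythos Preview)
Your Strang-type argument is exactly the paper's approach. The paper sets $w=v-u_h$ for $v\in V_h$, writes $b_h(w,w)=(b_h-a_h)(v,w)+a_h(v-u,w)+[a_h(u,w)-(f,w)]$, and then (i) integrates $(f,w)$ by parts elementwise to produce the flux $\veps^2\sum_K\int_{\pa K}n^{\T}\cdot\na^2u\cdot\na w$; (ii) rewrites the cross term $\veps^2(\na_h^2\Pi_hv,\na_h^2\Pi_h^cw)$ as a pure boundary sum, using that $\Pi_hv$ is piecewise quadratic so $\na\triangle\Pi_hv=0$; (iii) bounds the remaining cross term $\veps^2(\na_h^2\Pi_h^cv,\na_h^2\Pi_hw)$ directly via $\nm{\na_h^2\Pi_h^cv}{L^2}\le\nm{\na_h^2(v-u)}{L^2}+\nm{\na_h^2(u-\Pi_hv)}{L^2}$. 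Finally it takes $v=I_hu$, so that $\Pi_hv=\bar I_hu$ gives the correct second flux in $E_h$, while $\Pi_h^cv=I_hu-\bar I_hu$ in (iii) produces both $\wnm{u-I_hu}$ and $\wnm{u-\bar I_hu}$; that is the actual reason both interpolants appear.

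Two points in your write-up need fixing. First, $\bar I_hu=\Pi_hI_hu$ \emph{does} lie in $V_h$: it is piecewise in $\mb{P}_2(K)\subset Z_K$ and by~\eqref{eq:projection} shares all vertex degrees of freedom with $I_hu\in V_h$. So your proposed choice $v_h=\bar I_hu$ is perfectly legitimate --- and in fact slightly cleaner than the paper's, because then $\Pi_h^cv_h=0$ kills the cross term in (iii) outright and you land directly on $\wnm{u-\bar I_hu}+\sup_w E_h/\wnm{w}$, which already implies~\eqref{estimate:1}. Second, in your middle paragraph you apply $\Pi_h$ and $\Pi_h^c$ to the exact solution $u$; these operators are only defined on $Z_h$, so expressions like $\Pi_h^c u$ are meaningless. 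What belongs there is $\Pi_h v_h$, $\Pi_h^c v_h$ for the discrete comparison function, as in steps (ii)--(iii) above; with that substitution your bookkeeping goes through.
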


The structure of the above lemma differs from the standard error estimate of the nonconforming method because there are two
approximation terms. See; cf.~\cite{Berge:1972}.

\begin{proof}
For any $v\in V_h$, we denote $w=v-u_h$. It is clear that
\begin{align*}
b_h(w,w)&=b_h(v,w)-b_h(u_h,w)=b_h(v,w)-(f,w)\\
&=(b_h-a_h)(v,w)+a_h(v-u,w)+a_h(u,w)-(f,w).
\end{align*}

Noting that $w\in H_0^1(\Om)$ and $w_{|_K}\in H^2(K)$ for all $K\in\mc{T}_h$, integration by parts, we obtain
\begin{align*}
(f,w)&=\sum_{K\in\mc{T}_h}(f,w)_{|_K}=\sum_{K\in\mc{T}_h}(\veps^2\triangle^2u-\triangle u,w)_{|_K}\\
&=a_h(u,w)-\sum_{K\in\mc{T}_h}\veps^2\int_{\pa K}n^{\T}\cdot\na^2u\cdot\na w\md S,
\end{align*}
where 
\(
n^{\T}\cdot\na^2u\cdot\na w=\sum_{1\le i,j\le d}n_i\pa_{ij}^2u\pa_jw.
\)

Using the fact $\lr{\Pi_hv}_{|_K}\in\mb{P}_2(K)$ and integration by parts, we obtain
\begin{align*}
\lr{\na_h^2\Pi_hv,\na^2_h\Pi_h^cw}
&=\lr{\na_h\triangle_h\Pi_hv,\na_h\Pi_h^cw}+\sum_{K\in\mc{T}_h}\int_{\pa K}n^{\T}\cdot\na^2\Pi_Kv\cdot\na\Pi_K^cw\md S\\
&=\sum_{K\in\mc{T}_h}\int_{\pa K}n^{\T}\cdot\na^2\Pi_Kv\cdot\na_h\Pi_K^cw\md S.
\end{align*}

A combination of the above three equations implies
\begin{align*}
b_h(w,w)&=a_h(v-u,w)-\veps^2(\na^2_h\Pi_h^cv,\na^2_h\Pi_hw)\\
&\qquad+\sum_{K\in\mc{T}_h}\veps^2\int_{\pa K}n^{\T}\cdot\Lr{\na^2u\cdot\na w-\na^2\Pi_Kv\cdot\na\Pi_K^cw}\md S.
\end{align*}

Using the coercivity inequality~\eqref{coercivity} and noting 
\[
\nm{\na_h^2\Pi_h^cv}{L^2}\le\nm{\na_h^2\lr{v-u}}{L^2}+\nm{\na_h^2\lr{u-\Pi_hv}}{L^2},
\]
we have
\[
\wnm{w}\le C\Lr{\wnm{u-v}+\wnm{u-\Pi_hv}+\sup_{w\in V_h}\frac{E_h(u,\Pi_hv,w)}{\wnm{w}}},
\]
Taking $v=I_hu$ in the right hand side of the above inequality, we obtain~\eqref{estimate:1}.
\end{proof}
We are ready to derive the convergence rate of the TRUNC element. The first step is estimating the approximation error in the energy norm.
\begin{lemma}
There holds
\begin{equation}\label{ieq:1}
\wnm{u-I_hu}\le Ch^{1/2}\nm{f}{L^2},
\end{equation}
and
\begin{equation}\label{ieq:2}
\wnm{u-\bar{I}_hu}\le Ch^{1/2}\nm{f}{L^2}.
\end{equation}
\end{lemma}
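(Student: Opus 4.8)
The plan is to bound the two terms $\wnm{u-I_hu}$ and $\wnm{u-\bar{I}_hu}$ by splitting the energy norm into its two constituent parts, namely the $\veps$-weighted Hessian term and the gradient term, and interpolating between the regularity estimates of Theorem~\ref{thm:reg} at the right Sobolev scales. First I would recall that
\[
\wnm{u-I_hu}^2=\veps^2\nm{\na_h^2(u-I_hu)}{L^2}^2+\nm{\na_h(u-I_hu)}{L^2}^2,
\]
and similarly for $\bar{I}_h$. For the gradient term I would use~\eqref{interpolation:2} (resp.~\eqref{interpolation:3}) with $k=1$ and the choice $m=2$, giving $\nm{\na_h(u-I_hu)}{L^2}\le Ch\nm{\na^2u}{L^2}$; for the Hessian term I would use $k=2$, which forces $m=2$ or $m=3$, giving $\nm{\na_h^2(u-I_hu)}{L^2}\le C\nm{\na^2u}{L^2}$ or $\le Ch\nm{\na^3u}{L^2}$.

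The key point is to balance these estimates against the boundary-layer scaling. From~\eqref{regularity:1} we have $\nm{\na^2u}{L^2}\le C\veps^{-1/2}\nm{f}{L^2}$ and $\nm{\na^3u}{L^2}\le C\veps^{-3/2}\nm{f}{L^2}$. Plugging the crude bounds in directly only gives $\wnm{u-I_hu}\le C(\veps h^{-1/2}+h\veps^{-1/2})\nm{f}{L^2}$-type behaviour after optimizing, which is not uniformly $O(h^{1/2})$. Instead the trick is to use the multiplicative (interpolation) estimate~\eqref{eq:interpriori1}, i.e. $\nm{\na^2u}{L^2}^{1/2}\nm{\na^3u}{L^2}^{1/2}\le C\veps^{-1}\nm{f}{L^2}$. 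For the Hessian term I would write
\[
\veps\nm{\na_h^2(u-I_hu)}{L^2}\le C\veps\min\lr{\nm{\na^2u}{L^2},h\nm{\na^3u}{L^2}}\le C\veps h^{1/2}\nm{\na^2u}{L^2}^{1/2}\nm{\na^3u}{L^2}^{1/2}\le Ch^{1/2}\nm{f}{L^2},
\]
using $\min(a,hb)\le (a)^{1/2}(hb)^{1/2}=h^{1/2}a^{1/2}b^{1/2}$ and then~\eqref{eq:interpriori1}. For the gradient term I would similarly interpolate: $\nm{\na_h(u-I_hu)}{L^2}\le C\min\lr{\nm{\na u}{L^2}^{?},h\nm{\na^2u}{L^2}}$ — more precisely use $m=1$ giving $\le Ch^{1/2}\nm{\na u}{L^2}^{1/2}\cdots$; cleaner is $\nm{\na_h(u-I_hu)}{L^2}\le Ch^{1/2}\nm{\na u}{L^2}^{1/2}\nm{\na^2u}{L^2}^{1/2}$ (interpolating the $m=1$ and $m=2$ bounds with $k=1$), and then~\eqref{eq:interpriori2} gives $\le Ch^{1/2}\nm{f}{L^2}$.

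The argument for $\bar{I}_h$ is identical since~\eqref{interpolation:3} holds with the same exponents as~\eqref{interpolation:2}; no new ideas are required, and in fact one could also deduce~\eqref{ieq:2} from~\eqref{ieq:1} via the triangle inequality together with the bound $\wnm{\Pi_h^c I_hu}$, but going through~\eqref{interpolation:3} directly is cleanest. The main obstacle, and the only place real thought is needed, is recognizing that the naive application of interpolation error bounds must be replaced by the geometric-mean (interpolation-space) version, so that the $\veps$ powers from the boundary-layer regularity cancel exactly against the $\veps$ weight in $\wnm{\cdot}$ and the mesh power collapses to $h^{1/2}$ uniformly in $\veps$; this is precisely why the Corollary with its multiplicative a-priori estimates~\eqref{eq:interpriori1}--\eqref{eq:interpriori2} was stated. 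After that, the computation is routine: collect the Hessian and gradient contributions, take square roots, and conclude both~\eqref{ieq:1} and~\eqref{ieq:2}.
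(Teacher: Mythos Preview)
Your proposal is correct and follows essentially the same approach as the paper: splitting the energy norm, bounding the Hessian piece via the geometric mean of the $m=2$ and $m=3$ interpolation bounds and invoking~\eqref{eq:interpriori1}, and bounding the gradient piece via the geometric mean of the $m=1$ and $m=2$ bounds together with~\eqref{eq:interpriori2}. The paper phrases the geometric-mean step by writing $X=X^{1/2}X^{1/2}$ and estimating each factor separately, which is exactly your $\min(a,hb)\le h^{1/2}a^{1/2}b^{1/2}$ observation in different clothing; the argument for $\bar{I}_h$ is, as you say, identical via~\eqref{interpolation:3}.
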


\begin{proof}
Using the interpolation estimate ~\eqref{interpolation:2}, we obtain
\begin{align*}
\nm{\na_h^2(u-I_hu)}{L^2}&=\nm{\na_h^2(u-I_hu)}{L^2}^{1/2}\nm{\na_h^2(u-I_hu)}{L^2}^{1/2}\\
&\le Ch^{1/2}\nm{\na^2u}{L^2}^{1/2}\nm{\na^3 u}{L^2}\\
&\le C\veps^{-1}h^{1/2}\nm{f}{L^2},
\end{align*}
where we have used the a priori estimates~\eqref{eq:interpriori1} in the last step.

Proceeding along the same line that leads to the above inequality, using~\eqref{eq:interpriori2} instead of~\eqref{eq:interpriori1}, we get
\[
\nm{\na(u-I_hu)}{L^2}\le Ch^{1/2}\nm{f}{L^2}.
\]
Combining the above two estimates, we obtain
\[
\wnm{u-I_hu}\le\veps\nm{\na_h^2(u-I_hu)}{L^2}+\nm{\na(u-I_hu)}{L^2}\le Ch^{1/2}\nm{f}{L^2}.
\]
This gives~\eqref{ieq:1}.

Proceeding along the same line that leads to~\eqref{ieq:1}, using the interpolation estimate~\eqref{interpolation:3} for $\bar{I}_h$, we obtain~\eqref{ieq:2}.
\end{proof}
\begin{theorem}\label{thm:main}
Let $u$ and $u_h$ be the solution of Problem~\eqref{eq:variation} and Problem ~\eqref{eq:trunc}, respectively. Then
\begin{equation}\label{estimate:2}
\wnm{u-u_h}\le Ch^{1/2}\nm{f}{L^2}.
\end{equation}
Moreover, if $u\in H^3(\Om)$, then
\begin{equation}\label{estimate:3}
\wnm{u-u_h}\le C\lr{\veps h+h^2}\nm{\na^3u}{L^2}.
\end{equation}
\end{theorem}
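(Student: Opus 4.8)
The plan is to combine the abstract error bound~\eqref{estimate:1} with the approximation estimates~\eqref{ieq:1}–\eqref{ieq:2} and a careful treatment of the consistency error functional $E_h(u,\ov I_hu,w)$. The first two terms on the right of~\eqref{estimate:1} are already bounded by $Ch^{1/2}\nm{f}{L^2}$, so~\eqref{estimate:2} reduces entirely to showing
\[
\abs{E_h(u,\ov I_hu,w)}\le Ch^{1/2}\nm{f}{L^2}\,\wnm{w}\qquad\text{for all }w\in V_h.
\]
To do this I would first rewrite $E_h$ using the general weak continuity identity~\eqref{eq:weakcontinuity}. Since $\na^2\ov I_hu=\na^2\Pi_h I_hu$ is piecewise quadratic-gradient (hence $\na^2\ov I_hu$ is piecewise constant) and $\na\Pi_K^cw$ restricted to a face is linear, the term $\int_{\pa K}n^{\T}\cdot\na^2\ov I_hu\cdot\na\Pi_K^cw\,\md S$ equals $\sum_{F\subset\pa K}n^{\T}\cdot\na^2\ov I_hu\cdot Q_F(\na\Pi_K^cw)$, which by~\eqref{eq:weakcontinuity} (applied with the symmetric matrix $S=\na^2\ov I_hu_{|_K}$) vanishes. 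Thus the $\na^2\ov I_hu$ contribution drops out of $E_h$ \emph{after} one inserts and subtracts a constant matrix; more precisely, I would insert an arbitrary elementwise-constant symmetric matrix $\bar S_K$ (e.g. the mean of $\na^2 u$ on $K$) and write
\[
E_h(u,\ov I_hu,w)=\sum_K\int_{\pa K}n^{\T}\cdot\bigl((\na^2u-\bar S_K)\cdot\na w\bigr)\md S-\sum_K\int_{\pa K}n^{\T}\cdot\bigl((\na^2\ov I_hu-\bar S_K)\cdot\na\Pi_K^cw\bigr)\md S,
\]
using on the second sum that $\int_{\pa K}n^{\T}\cdot\bar S_K\cdot\na\Pi_K^cw\,\md S=0$ by~\eqref{eq:weakcontinuity}, and on the first sum that $\int_{\pa K}n^{\T}\cdot\bar S_K\cdot\na w\,\md S$ telescopes to zero when summed over $K$ because $\na w$ is continuous in the normal-averaged sense along interior faces and $w\in H_0^1$ (here one exploits that $Q_F(\na w)$ is single-valued across faces, exactly the consistency built into the TRUNC construction).

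The second step is to estimate the two remaining sums by Cauchy–Schwarz, the trace inequalities~\eqref{ieq:trace_2}–\eqref{ieq:trace_3}, and the interpolation/approximation estimates. For the first sum choose $\bar S_K$ to be the $L^2(K)$-average of $\na^2u$; then $\nm{\na^2u-\bar S_K}{L^2(\pa K)}\le Ch_K^{-1/2}\nm{\na^2u-\bar S_K}{L^2(K)}+\cdots\le Ch_K^{1/2}\nm{\na^3u}{L^2(K)}$ by~\eqref{ieq:trace_2} and a Bramble–Hilbert argument, while $\nm{\na w}{L^2(\pa K)}\le Ch_K^{-1/2}\nm{\na w}{L^2(K)}$ by~\eqref{ieq:trace_3}; summing, the factors $h_K^{\pm1/2}$ cancel and one gets $C\nm{\na^3u}{L^2}\nm{\na_hw}{L^2}$—but that is too crude by itself. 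To recover the $h^{1/2}$ one instead splits the derivative differently: bound $\nm{\na w}{L^2(\pa K)}$ by the interpolation-type half-power trick $\nm{\na w}{L^2(K)}^{1/2}\nm{\na^2 w}{L^2(K)}^{1/2}$-type estimate combined with the factor $\veps$ hidden in $\wnm{w}$, OR, more cleanly, use the a priori bound $\nm{\na^3u}{L^2}\le C\veps^{-1}\nm{f}{L^2}$ from~\eqref{regularity:1} together with a gain of $h$ by replacing $\na^2 u-\bar S_K$ by $\na^2(u-\ov I_h u)$ plus a term handled by~\eqref{interpolation:3}; tracking the $\veps$ and $h$ powers via $\nm{\na^2 u}{L^2}^{1/2}\nm{\na^3u}{L^2}^{1/2}\le C\veps^{-1}\nm{f}{L^2}$ from~\eqref{eq:interpriori1} yields the $h^{1/2}$. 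For the second sum, $\nm{\na^2\ov I_hu-\bar S_K}{L^2(\pa K)}\le Ch_K^{-1/2}(\nm{\na^2(\ov I_hu-u)}{L^2(K)}+\nm{\na^2u-\bar S_K}{L^2(K)})$ by~\eqref{ieq:trace_3}, which is $\le C(h_K^{-1/2}\cdot h_K\nm{\na^3u}{L^2(\om_K)})$ by~\eqref{eq:quasi-local2} and Bramble–Hilbert, and $\nm{\na\Pi_K^cw}{L^2(\pa K)}\le Ch_K^{-1/2}\nm{\na\Pi_K^cw}{L^2(K)}\le Ch_K^{-1/2}\nm{\na_h^2 w}{L^2(K)}h_K$ using~\eqref{interpolation:1} applied to $w$ (note $\Pi_K^c w=w-\Pi_Kw$ and $\na w$ has vanishing zeroth moment against its quadratic interpolant, giving one extra power of $h$). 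Assembling: the second sum is $\lesssim h\nm{\na^3u}{L^2}\cdot\nm{\na_h^2w}{L^2}\lesssim h\nm{\na^3u}{L^2}\cdot\veps^{-1}\wnm{w}\lesssim h\veps^{-1}\cdot\veps^{-1}\nm{f}{L^2}\wnm{w}$—again one must be careful, and the clean bookkeeping is to keep the $\veps$-weighted $\na^2$ norm of $w$ paired with the $\veps^2$ prefactor already sitting in front of the boundary integrals in the derivation of~\eqref{estimate:1}, so that the net power is $\veps^2\cdot h\cdot\veps^{-1}\cdot\veps^{-1}=h$, not enough—hence one must additionally spend one of the $\veps^{-1}$'s against an $h^{-1/2}$ only down to the boundary-layer scale, which is precisely why the exponent degrades from $h$ to $h^{1/2}$: the worst case is $h\sim\veps$.

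For the refined estimate~\eqref{estimate:3} under the extra regularity $u\in H^3(\Om)$, one repeats the same decomposition but no longer invokes the singular a priori bounds~\eqref{eq:interpriori1}–\eqref{eq:interpriori2}; instead one keeps $\nm{\na^3 u}{L^2}$ as an independent quantity. Then the approximation terms $\wnm{u-I_hu}$ and $\wnm{u-\ov I_hu}$ are bounded via~\eqref{interpolation:2}–\eqref{interpolation:3} directly by $C(\veps h+h^2)\nm{\na^3u}{L^2}$ (the $\veps h$ from the $\veps$-weighted $\na^2$ part at first order, the $h^2$ from the $\na$ part at second order), and the first consistency sum is $\lesssim \veps^2\cdot h\nm{\na^3u}{L^2}\cdot h^{-1/2}\cdot h^{1/2}\veps^{-1}\wnm{w}=\veps h\nm{\na^3u}{L^2}\wnm{w}$ while the second is $\lesssim\veps^2 h\nm{\na^3u}{L^2}\cdot\veps^{-1}\wnm{w}$—but here one must recheck the $\na w$ trace estimate: using $\nm{\na w}{L^2(\pa K)}\le Ch_K^{-1/2}\nm{\na w}{L^2(K)}$ naively gives only $\veps\nm{\na^3u}{L^2}\wnm{w}\cdot O(1)$, so to squeeze out the extra $h$ one must again exploit the vanishing moment of $\na w-$(its interpolant) against constants, i.e. pair $\na^2 u-\bar S_K$ with $\na\Pi_K^c w$ rather than $\na w$ by first replacing $w$ by $\Pi_h w$ plus $\Pi_h^c w$ and noting $\int_{\pa K}n^{\T}\bar S_K\na\Pi_K w\,\md S$ telescopes. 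Collecting all four contributions gives~\eqref{estimate:3}.

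I expect the main obstacle to be precisely this $\veps$-vs-$h$ bookkeeping in the consistency term: the naive trace estimates lose either a power of $h$ or a power of $\veps$, and the correct $h^{1/2}$ rate (uniform in $\veps$) only emerges after (i) exploiting the weak continuity identity~\eqref{eq:weakcontinuity} to kill the constant-matrix part of $\na^2\ov I_hu$ on each element, (ii) exploiting the telescoping/single-valuedness of $Q_F(\na w)$ across interior faces to kill the constant-matrix part paired with $\na w$, and (iii) interpolating between the $L^2$-norm of $\na^2 u$ and $\na^3 u$ via~\eqref{eq:interpriori1}, whose geometric-mean structure is exactly balanced so that $\veps^2\cdot\veps^{-1}=\veps$ compensates one half-power of the mesh size and no more. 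Establishing the correct way to apply~\eqref{eq:weakcontinuity} to the \emph{interpolant} $\ov I_h u$ (as opposed to an element of $Z_h$, which it is, so the identity does apply) and verifying that $\na^2\ov I_h u_{|_K}$ is genuinely a constant symmetric matrix on each $K$ (it is, since $\ov I_h u_{|_K}=\Pi_K I_h u\in\mb P_2(K)$) are the crucial structural checks that make the whole argument go through.
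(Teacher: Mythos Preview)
Your decomposition of the consistency functional rests on two structural claims that both fail. You assert that $\int_{\pa K} n^{\T}\cdot\bar S_K\cdot\na\Pi_K^cw\,\md S=0$ follows from the weak continuity identity~\eqref{eq:weakcontinuity}, justified by the statement that ``$\na\Pi_K^cw$ restricted to a face is linear''. This is false: $\Pi_K^c w$ lies in the cubic span $\{\lam_i^2\lam_j-\lam_i\lam_j^2\}$, so $\na\Pi_K^c w$ is \emph{quadratic} on each face, and the vertex quadrature $Q_F$ (which is what~\eqref{eq:weakcontinuity} actually controls) is exact only for linear polynomials. Indeed a direct computation gives $\int_K\na^2(\lam_j^2\lam_k-\lam_j\lam_k^2)\,\md x=\tfrac{2|K|}{d+1}\bigl(\na\lam_j\na\lam_j^{\T}-\na\lam_k\na\lam_k^{\T}\bigr)$, so by the divergence theorem the boundary integral does not vanish for a generic constant symmetric $S$. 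Your second claim, that $\sum_K\int_{\pa K}n^{\T}\cdot\bar S_K\cdot\na w\,\md S$ telescopes because $Q_F(\na w)$ is single-valued, fails for the same reason: $\int_F\na w\,\md S\neq Q_F(\na w)$ for $w\in V_h$, only the tangential part of $\na w$ is continuous across interior faces, and $\bar S_K$ varies from element to element.

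The correct route, taken in the paper, is to split $\na w=\na\Pi_h w+\na\Pi_h^c w$ \emph{first} and reorganize $E_h$ so that one piece pairs $\na^2(u-\ov I_hu)$ with $\na\Pi_h^c w$ (estimated directly via the trace inequalities,~\eqref{interpolation:1} applied to $w$, and the multiplicative bound~\eqref{eq:interpriori1}), while the other pairs $\na^2 u$ with $\na\Pi_h w$. The point is that it is $\na\Pi_K w$, not $\na\Pi_K^c w$, that lies in $\mb P_1(F)$ on each face; hence $\int_F\na\Pi_K w\,\md S=Q_F(\na\Pi_K w)=Q_F(\na w)-Q_F(\na\Pi_K^c w)$ exactly. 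After replacing $\na^2u$ by its face mean $P_0^F(\na^2u)$ (the residual being a standard $E_{21}$ term), the single-valuedness of $Q_F(\na w)$ kills the first quadrature contribution, and~\eqref{eq:weakcontinuity} with $S=\na^2\ov I_hu|_K$ lets one add $\na^2\ov I_hu$ back into the remaining $Q_F(\na\Pi_K^c w)$ term. Once this decomposition is in place your $\veps$--$h$ bookkeeping is essentially correct: pair $\nm{\na^2u}{L^2}^{1/2}\nm{\na^3u}{L^2}^{1/2}\le C\veps^{-1}\nm{f}{L^2}$ against $\nm{\na\Pi_K^c w}{L^2(\pa K)}\le Ch_K^{1/2}\nm{\na^2 w}{L^2(K)}$, and $\veps^2\cdot\veps^{-1}\cdot\nm{\na_h^2w}{L^2}\le\veps\,\wnm{w}$ closes the bound cleanly---no ``worst case $h\sim\veps$'' balancing is required, and the same decomposition with $\nm{\na^3u}{L^2}$ kept as a free quantity yields~\eqref{estimate:3}.
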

\begin{proof}
It remains to deal with the consistent error. We firstly write 
\begin{align*}
E_h(u,\bar{I}_hu,w)&=\sum_{K\in\mc{T}_h}\veps^2\int_{\pa K}n^{\T}\cdot\na^2_h\lr{u-\bar{I}_hu}\cdot\na_h\lr{w-\Pi_hw}\md S\\
&\qquad+\sum_{K\in\mc{T}_h}\veps^2\int_{\pa K}n^{\T}\cdot\na^2u\cdot\na_h\Pi_hw\md S\\
&={:}E_1+E_2.
\end{align*}

For any $K\in\mc{T}_h$, using~\eqref{eq:quasi-local2}, we get
\begin{align*}
\nm{\na^2(u-\bar{I}_hu)}{L^2(K)}
&=\nm{\na^2(u-\bar{I}_hu)}{L^2(K)}^{1/2}\nm{\na^2(u-\bar{I}_hu)}{L^2(K)}^{1/2}\\
&\le Ch_K^{1/2}\nm{\na^2u}{L^2(\om_K)}^{1/2}\nm{\na^3u}{L^2(\om_K)}^{1/2}.
\end{align*}
Using the trace inequality~\eqref{ieq:trace_2} and the boundedness of $\bar{I}_h$ with respect to $H^2$ norm and $H^3$ norm, we obtain
\begin{align*}
\nm{\na^2(u-&\bar{I}_hu)}{L^2(\pa K)}
\le C\bigg(h_K^{-1/2}\nm{\na^2(u-\bar{I}_hu)}{L^2(K)}\\
&\qquad+\nm{\na^2(u-\bar{I}_hu)}{L^2(K)}^{1/2}\nm{\na^3(u-\bar{I}_hu)}{L^2(K)}^{1/2}\bigg)\\
&\le C\Lr{h_K^{-1/2}\nm{\na^2(u-\bar{I}_hu)}{L^2(K)}
+\nm{\na^2u}{L^2(\om_K)}^{1/2}\nm{\na^3u}{L^2(\om_K)}^{1/2}}.
\end{align*}
A combination of the above two inequalities gives
\begin{equation}\label{eq:quasi-local3}
\nm{\na^2(u-\bar{I}_hu)}{L^2(\pa K)}\le C\nm{\na^2u}{L^2(\om_K)}^{1/2}\nm{\na^3u}{L^2(\om_K)}^{1/2}.
\end{equation}

Proceeding along the same line that leads to the above inequalities, using the trace inequality~\eqref{ieq:trace_3}, and the interpolation estimates~\eqref{interpolation:1}, we obtain
\begin{equation}\label{eq:tracepoly}
\nm{\na(w-\Pi_K w)}{L^2(\pa K)}
\le Ch_K^{-1/2}\nm{\na(w-\Pi_Kw)}{L^2(K)}
\le Ch_K^{1/2}\nm{\na^2w}{L^2(K)}.
\end{equation}
Combining the above two inequalities and using the regularity estimate~\eqref{eq:interpriori1}, we bound $E_1$ as
\begin{equation}\label{eq:5}
\begin{aligned}
E_1&\le\veps^2\sum_{K\in\mc{T}_h}\nm{\na^2(u-\bar{I}_hu)}{L^2(\pa K)}\nm{\na(w-\Pi_Kw)}{L^2(\pa K)}\\
&\le C\veps^2h^{1/2}\sum_{K\in\mc{T}_h}\nm{\na^2u}{L^2(\om_K)}^{1/2}\nm{\na^3 u}{L^2(\om_K)}^{1/2}
\nm{\na^2w}{L^2(K)}\\
&\le C\veps^2h^{1/2}\nm{\na^2u}{L^2}^{1/2}\nm{\na^3u}{L^2}^{1/2}\nm{\na_h^2w}{L^2}\\
&\le Ch^{1/2}\nm{f}{L^2}\wnm{w}.
\end{aligned}
\end{equation}

For any $\phi\in L^2(F)$, we define $P_0^F{:}\,L^2(F)\rightarrow\mb{P}_0(F)$ as the $L^2-$projection operator, i.e.,
\[
P_0^F(\phi){:}=\frac{1}{\abs{F}}\int_F\phi\md S.
\]
We denote the residue $R_0^F(\phi){:}=\phi-P_0^F(\phi)$.
It is clear that
\[
\int_F R_0^F(\phi)\md S=0.
\]
Similar notation applies to the $L^2-$projection operator on $L^2(K)$. Using the above identity, we write $E_2$ as
\begin{equation}\label{eq:6}
\begin{aligned}
E_2&=\veps^2\sum_{K\in\mc{T}_h}\sum_{F\subset\pa K}\int_Fn^{\T}\cdot R_0^F\lr{\na^2u}\cdot R_0^F\lr{\na_h\Pi_hw}\md S\\
&\qquad+\veps^2\sum_{K\in\mc{T}_h}\sum_{F\subset\pa K}\int_Fn^{\T}\cdot P_0^F\lr{\na^2u}\cdot \na_h\Pi_hw\md S\\
&={:}E_{21}+E_{22}.
\end{aligned}
\end{equation}

Proceeding along the same line that leads to~\eqref{eq:5}, we obtain
\begin{equation}\label{eq:7}
E_{21}\le\veps^2\sum_{K\in\mc{T}_h}\sum_{F\subset\pa K}\nm{R_0^F\lr{\na^2u}}{L^2(F)}\nm{R_0^F\lr{\na_h\Pi_hw}}{L^2(F)}
\le Ch^{1/2}\nm{f}{L^2}\wnm{w}.
\end{equation}

Noting that $\lr{\na_h\Pi_hw}|_F\in\mb{P}_1(F)$ and
\[
\int_F\na_h\Pi_hw\md S=Q_F(\na_h\Pi_hw)=Q_F(\na_hw)-Q_F\lr{\na_h\Pi_h^cw}.
\]
Using the above quadrature formula and the weak continuity~\eqref{eq:weakcontinuity}, we rewrite $E_{22}$ as
\begin{align*}
E_{22}&=\veps^2\sum_{F\in\mc{F}_h^I}n^{\T}\cdot P_0^F\lr{\na^2u}\cdot\jump{Q_F\lr{\na_hw}}+\veps^2\sum_{F\in\mc{F}_h^B}n^{\T}\cdot P_0^F\lr{\na^2u}\cdot Q_F\lr{\na_hw}\\
&\qquad-\veps^2\sum_{K\in\mc{T}_h}\sum_{F\subset\pa K}n^{\T}\cdot P_0^F\lr{\na^2u}\cdot Q_F\lr{\na_h\Pi_h^cw}\\
&=\veps^2\sum_{K\in\mc{T}_h}\sum_{F\subset\pa K}n^{\T}\cdot \lr{\na_h^2\bar{I}_hu-P_0^F\lr{\na^2u}}\cdot Q_F\lr{\na_h\Pi_h^cw},
\end{align*}
where we have used the facts that $\jump{Q_F(\na_hw)}_{|_F}=0$ for all $F\in\mc{F}_h^{\,I}$, and $Q_F(\na_hw)=0$ for all $F\in\mc{F}_h^B$ in the last step of the above derivation. 

Using a scaling argument, we obtain, there exists a constant $C$ independent of the size of $F$ such that for any polynomial $v$, there holds
\[
\abs{Q_F(v)}\le C\nm{v}{L^1(F)}.
\]
Hence, using the above inequality and the inverse inequality, we bound $E_{22}$ as
\begin{equation}\label{eq:8}
\begin{aligned}
E_{22}&\le C\veps^2\sum_{K\in\mc{T}_h}\sum_{F\subset\pa K}\nm{\na_h^2\bar{I}_hu-P_0^F\lr{\na^2u}}{L^{\infty}(F)}\nm{\na(w-\Pi_hw)}{L^1(F)}\\
&\le C\veps^2\sum_{K\in\mc{T}_h}\sum_{F\subset\pa K}\nm{\na_h^2\bar{I}_hu-P_0^F\lr{\na^2u}}{L^2(F)}
\nm{\na(w-\Pi_hw)}{L^2(F)}
\end{aligned}
\end{equation}
Using the triangle inequality, we obtain
\[
\nm{\na^2\bar{I}_hu-P_0^F\lr{\na^2u}}{L^2(F)}\le
\nm{\na^2(u-\bar{I}_hu)}{L^2(F)}+
\nm{\na^2u-P_0^F\lr{\na^2u}}{L^2(F)}.
\]
Invoking the trace inequality~\eqref{ieq:trace_2} again, we obtain
\begin{align*}
\nm{&\na^2u-P_0^F\lr{\na^2u}}{L^2(F)}=\min_{c\in\mb{R}^{(d-1)\times (d-1)}}\nm{\na^2u-c}{L^2(F)}\\
&\le\nm{\na^2u-P_0^K(\na^2u)}{L^2(F)}\\
&\le C\Lr{h_K^{-1/2}\nm{\na^2u-P_0^K(\na^2u)}{L^2(K)}+\nm{\na^2u-P_0^K(\na^2u)}{L^2(K)}^{1/2}\nm{\na^3u}{L^2(K)}^{1/2}}\\
&\le 
C\nm{\na^2u}{L^2(K)}^{1/2}\nm{\na^3u}{L^2(K)}^{1/2},
\end{align*}
which together with~\eqref{eq:quasi-local3} leads to
\[
\nm{\na^2\bar{I}_hu-P_0^F\lr{\na^2u}}{L^2(F)}\le C\nm{\na^2u}{L^2(\om_K)}^{1/2}\nm{\na^3u}{L^2(\om_K)}^{1/2}.
\]

Substituting the above estimate into~\eqref{eq:8}, using~\eqref{eq:tracepoly}, we obtain
\begin{align*}
E_{22}&\le C\veps^2h^{1/2}\sum_{K\in\mc{T}_h}\nm{\na^2u}{L^2(\om_K)}^{1/2}\nm{\na^3u}{L^2(\om_K)}^{1/2}\nm{\na_h^2w}{L^2(K)}\\
&\le C\veps h^{1/2}\nm{\na^2u}{L^2}^{1/2}\nm{\na^3u}{L^2}^{1/2}\wnm{w}\\
&\le Ch^{1/2}\nm{f}{L^2}\wnm{w}.
\end{align*}

Combining the above inequalities~\eqref{eq:5}, ~\eqref{eq:6}, ~\eqref{eq:7} and~\eqref{eq:8}, we obtain
\begin{equation}\label{ieq:3}
\sup_{w\in V_h}\frac{E_h(u,\bar{I}_hu,w)}{\wnm{w}}\le Ch^{1/2}\nm{f}{L^2}.
\end{equation}
Substituting~\eqref{ieq:1},~\eqref{ieq:2} and ~\eqref{ieq:3} into~\eqref{estimate:1}, we obtain~\eqref{estimate:2}.

Using the interpolation estimates~\eqref{interpolation:2} and ~\eqref{interpolation:3}, we have
\[
\wnm{u-I_hu}+\wnm{u-\bar{I}_hu}\le C\lr{\veps h+h^2}\nm{\na^3u}{L^2}.
\]
Proceeding the same fashion to derive~\eqref{ieq:3}, we obtain
\[
E_h(u,\bar{I}_hu,w)\le C\veps^2h\nm{\na^3u}{L^2}\nm{\na_h^2w}{L^2}\le C\veps h\nm{\na^3u}{L^2}\wnm{w}.
\]
Combining the above estimates, we get~\eqref{estimate:3}.
\end{proof}
\section{Numerical Experiments}
This section focuses on evaluating the numerical accuracy of the TRUNC element in three dimensions. We consider the domain $\Omega=(0,1)^3$, and generate an initial mesh by dividing the unit cube into $64$ smaller cubes, with each small cube further divided into $6$ tetrahedra; we refer to Fig.\ref{mesh} for a plot. In all our tests, we measure the rates of convergence in the relative energy norm $\wnm{u-u_h}/\wnm{u}$ with respect to different values of $\veps$. To facilitate comparison, we also present the convergence rate of the NZT element.
\begin{figure}
\centering
\includegraphics[width=6cm, height=4.5cm]{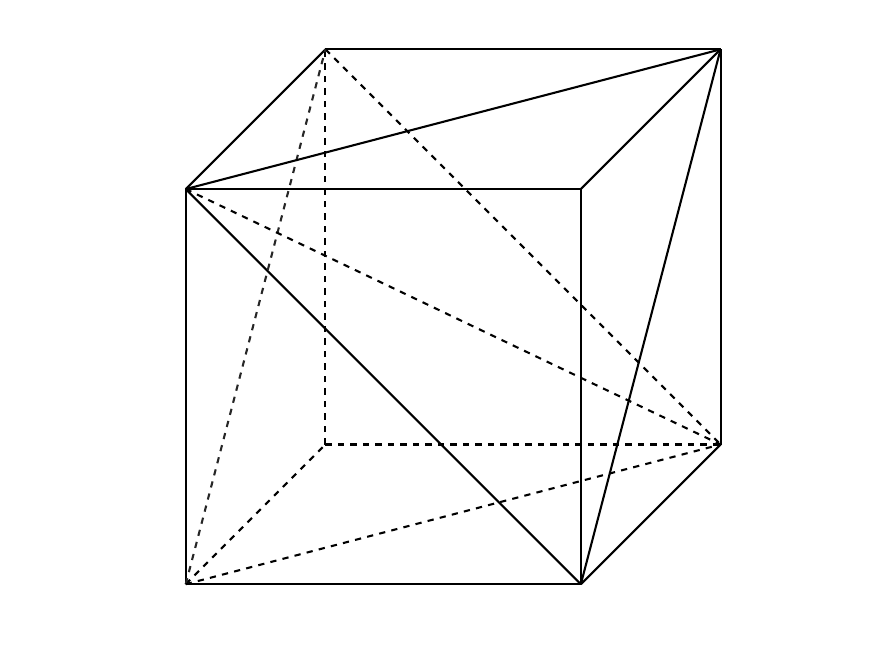}
\caption{Plots of meshes.}\label{mesh}
\end{figure}
\subsection{Example of smooth solution}
This test example evaluates the accuracy of the elements for a smooth solution defined as
\(
u=8\sin^2(\pi x_1)\sin^2(\pi x_2)\sin^2(\pi x_3).
\)
The source term $f$ is determined using~\eqref{eq:bc}. The convergence rates for the TRUNC element and the NZT element are reported in Table~\ref{tab:1} and Table~\ref{tab:2}, respectively. It is observed that the convergence rate appears to be linear when $\veps$ is large, transitioning to quadratic as $\veps$ approaches zero, aligning with the estimation in~\eqref{estimate:3}. Furthermore, the TRUNC element demonstrates higher accuracy compared to the NZT element. Although the NZT element in two dimensions (Specht triangle) is recognized as the optimal triangle plate bending element with $9$ degrees of freedom, as proven by ~\cite{ZienkiewiczTaylor:2009}, the NZT element remains highly efficient for solving the strain gradient elasticity model in three dimensions according to~\cite{LiMingWang:2021}.
\begin{table*}[htbp]
\caption{Convergence rates of the TRUNC element for a smooth solution.}\label{tab:1}
\begin{tabular*}{\textwidth}{@{\extracolsep\fill}lllllll@{}}
\hline\noalign{\smallskip}
$\veps\backslash h$ &$1/4$&$1/8$&$1/16$&$1/32$&$1/64$&$1/128$\\
\noalign{\smallskip}\hline\noalign{\smallskip}

1e-0&5.592e-01&3.016e-01&1.524e-01&7.626e-02&3.809e-02&1.897e-02\\
\noalign{\smallskip}
\textbf{rate}&&0.8908&0.9849&0.9988&1.0013&1.0057\\
\noalign{\smallskip}

1e-2&1.581e-01&4.302e-02&1.512e-02&6.563e-03&3.150e-03&1.561e-03\\
\noalign{\smallskip}
\textbf{rate}&&1.8782&1.5089&1.2036&1.0591&1.0127\\
\noalign{\smallskip}

1e-4&1.513e-01&3.491e-02&8.473e-03&2.079e-03&5.165e-04&1.299e-04\\
\noalign{\smallskip}
\textbf{rate}&&2.1158&2.0427&2.0271&2.0089&1.9910\\
\noalign{\smallskip}

1e-6&1.513e-01&3.491e-02&8.472e-03&2.078e-03&5.156e-04&1.286e-04\\
\noalign{\smallskip}
\textbf{rate}&&2.1158&2.0428&2.0276&2.0108&2.0040\\
\noalign{\smallskip}

\hline\noalign{\smallskip}
\end{tabular*}
\end{table*}
\begin{table*}[htbp]
\caption{Convergence rates of the NZT element for a smooth solution.}\label{tab:2}
\begin{tabular*}{\textwidth}{@{\extracolsep\fill}lllllll@{}}
\hline\noalign{\smallskip}
$\veps\backslash h$ &$1/4$&$1/8$&$1/16$&$1/32$&$1/64$&$1/128$\\
\noalign{\smallskip}\hline\noalign{\smallskip}

1e-0&6.913e-01&4.021e-01&1.956e-01&9.493e-02&4.670e-02&2.323e-02\\
\noalign{\smallskip}
\textbf{rate}&&0.7817&1.0399&1.0428&1.0236&1.0073\\
\noalign{\smallskip}

1e-2&2.294e-01&6.908e-02&2.241e-02&9.240e-03&4.338e-03&2.089e-03\\
\noalign{\smallskip}
\textbf{rate}&&1.7315&1.6242&1.2781&1.0908&1.0541\\
\noalign{\smallskip}

1e-4&1.939e-01&4.619e-02&1.104e-02&2.720e-03&6.784e-04&1.706e-04\\
\noalign{\smallskip}
\textbf{rate}&&2.0699&2.0644&2.0216&2.0032&1.9912\\
\noalign{\smallskip}

1e-6&1.939e-01&4.618e-02&1.104e-02&2.678e-03&6.631e-04&1.654e-04
\\
\noalign{\smallskip}
\textbf{rate}&&2.0700&2.0646&2.0436&2.0139&2.0031
\\
\noalign{\smallskip}

\hline\noalign{\smallskip}
\end{tabular*}
\end{table*}
\subsection{Example of solution with boundary layer}
In this example, we test the performance of both elements for a solution with a boundary layer. As in~\cite{LiMingWang:2021}, we construct a solution as
\[
u=\prod_{i=1}^3\lr{\exp(\sin\pi x_i)-1-\veps\varphi(x_i)}
\]
with
\[
\varphi(x)=\pi\dfrac{\cosh(1/2\veps)-\cosh\lr{(2x-1)/2\veps}}{\sinh(1/2\veps)}.
\]
A direct calculation gives
\[
\bar{u}=\lim_{\veps\rightarrow 0}u=\prod_{i=1}^3\lr{\exp(\sin\pi x_i)-1},
\]
with $\bar{u}|_{\pa\Om}=0$ and $\pa_n\bar{u}|_{\pa\Om}\neq0$. It is clear that the $\pa_n u$ has a boundary layer. The source term $f$ is also computed by~\eqref{eq:bc}. We report the rates of convergence for $\veps=10^{-6}$ in Table~\ref{tab:3}. A half order rate of convergence is observed, which is consistent with the theoretical prediction~\eqref{estimate:2}. We also observe that the TRUNC-type element is more accurate than the NZT element.
\begin{table*}[htbp]
\caption{Rate of convergence for a solution with boundary layer.}\label{tab:3}
\begin{tabular*}{\textwidth}{@{\extracolsep\fill}lllllll@{}}
\hline\noalign{\smallskip}
$h$ &$1/4$&$1/8$&$1/16$&$1/32$&$1/64$&$1/128$\\
\noalign{\smallskip}\hline\noalign{\smallskip}

\textbf{TRUNC}&2.654e-01&1.489e-01&9.996e-02&6.993e-02&4.930e-02&3.481e-02\\
\noalign{\smallskip}
\textbf{rate}&&0.8336&0.5754&0.5154&0.5043&0.5021\\
\noalign{\smallskip}
\hline

\textbf{NZT}&3.184e-01&1.854e-01&1.253e-01&8.757e-02&6.167e-02&4.356e-02\\
\noalign{\smallskip}
\textbf{rate}&&0.7807&0.5648&0.5170&0.5058&0.5014\\
\noalign{\smallskip}
\hline
\end{tabular*}
\end{table*}
\section{Conclusion}
We introduced the TRUNC finite element method applicable in any dimension and established the weak continuity condition, significantly streamlining the existing error estimate for the TRUNC approximation of the biharmonic problem. Leveraging this weak continuity condition, we derived a uniform error estimate for a modified Poisson equation posed on a convex polytope. Our numerical examples provide empirical support for the theoretical predictions.

\bibliographystyle{amsplain}
\bibliography{sg-8}

\providecommand{\bysame}{\leavevmode\hbox to3em{\hrulefill}\thinspace}
\providecommand{\MR}{\relax\ifhmode\unskip\space\fi MR }
\providecommand{\MRhref}[2]{%
  \href{http://www.ams.org/mathscinet-getitem?mr=#1}{#2}
}
\providecommand{\href}[2]{#2}
\begin{thebibliography}{10}

\bibitem{AdamsFournier:2003}
R.A. Adams and J.J.F. Fournier, \emph{Sobolev {S}paces}, Academic Press, 2nd
  ed., 2003.

\bibitem{Argyris:1980}
J.H. Argyris, M.~Haase, and H.P. Mlejnek, \emph{On an unconventional but
  natural formation of a stiffness matrix}, Comput. Methods Appl. Mech. Engrg.
  \textbf{22} (1980), 1--22.

\bibitem{Argyris:1982}
\bysame, \emph{Some considerations on the natural approach}, Comput. Methods
  Appl. Mech. Engrg. \textbf{30} (1982), 335--346.

\bibitem{Argyris:1986}
\bysame, \emph{Trunc for shells—an element possibly to the taste of bruce
  irons}, Internat. J. Numer. Meths. Engrg. \textbf{22} (1986), 93--115.

\bibitem{Bazant:2011}
M.~Z. Bazant, B.~D. Storey, and A.~A. Kornyshev, \emph{Double layer in ionic
  liquids: Overscreening versus crowding}, Phys. Rev. Lett. \textbf{106}
  (2011), 046102.

\bibitem{Zienkiewicz:1965}
G.~P. Bazeley, Y.~K. Cheung, B.~M. Irons, and O.~C. Zienkiewicz,
  \emph{Triangular elements in bending-conforming and nonconforming solutions},
  Proc.conf.matrix Methods in Struct.mech \textbf{7} (1965), no.~3, 547--576.

\bibitem{Bergan:1984}
P.~G. Bergan and M.~K. Nygård, \emph{Finite elements with increased freedom in
  choosing shape functions}, Int. J. Numer. Methods Eng. \textbf{20} (1984),
  643--663.

\bibitem{Berge:1972}
A.~Berger, L.R. Scott, and G.~Strang, \emph{Approximate boundary conditions in
  the finite element method}, Symposia Mathematica \textbf{X} (1972), 295--313.

\bibitem{BrennerScott:2008}
S.C. Brenner and L.R. Scott, \emph{The {M}athematical {T}heory of {F}inite
  {E}lement {M}ethods}, Springer Science $+$ Buiness Media LLC. 3rd eds., 2008.

\bibitem{ChenShi:1991}
S.~C. Chen and Z.-C. Shi, \emph{On the free formation of constructing stiffness
  matrices}, Acta Numer. Math. \textbf{13} (1991), 417--424.

\bibitem{Ciarlet:1978}
P.G. Ciarlet, \emph{The {F}inite {E}lement {M}ethod for {E}lliptic {P}roblems},
  North-Holland, Amsterdam, 1978.

\bibitem{GuoHuangShi:2006}
L.~Guo, J.G. Huang, and Z.-C. Shi, \emph{{Remarks on error estimates for the
  Trunc plate element}}, J. Comput. Math. \textbf{24} (2006), 103--112.

\bibitem{GuzmanLeykekhmanNeilan:2012}
J.~Guzm\'an, D.~Leykekhman, and M.~Neilan, \emph{A family of non-conforming
  elements and the analysis of {N}itsche's method for a singularly perturbed
  fourth order problem}, Calcolo \textbf{49} (2012), 95--125.

\bibitem{Hildebrandt:2004}
A.~Hildebrandt, R.~Blossey, S.~Rjasanow, O.~Kohlbacher, and H.-P. Lenhof,
  \emph{Novel formulation of nonlocal electrostatics}, Phys. Rev. Lett.
  \textbf{93} (2004), 108104.

\bibitem{HuZhang:2020_1}
J.~Hu and Zhang S.Y., \emph{{An error analysis method SPP-BEAM and a
  construction guideline of nonconforming finite elements for fourth order
  elliptic problems}}, J. Comput. Math. \textbf{38} (2020), no.~1, 195--222.

\bibitem{HuZhang:2020_2}
J.~Hu, S.~Tian, and S.~Zhang, \emph{{A family of 3D H$^2$-nonconforming
  tetrahedral finite elements for the biharmonic equation}}, Sci. China Math.
  \textbf{63} (2020), 1505--1522.

\bibitem{LiMing:2024}
H.L. Li and P.-B. Ming, \emph{An asymptotic-preserving finite element method
  for a forth order singular perturbation problem with boundary layers}, Appl.
  Math. Lett. \textbf{149} (2024), 108923.

\bibitem{LiMingWang:2021}
H.L. Li, P.-B. Ming, and H.Y. Wang, \emph{H$^2$-korn’s inequality and the
  nonconforming elements for the strain gradient elastic model}, J. Sci.
  Comput. \textbf{88} (2021), no.~78.

\bibitem{LiMingShi:2020}
H.L. Li, P.B. Ming, and Z.-C. Shi, \emph{{The quadratic Specht triangle}}, J.
  Comput. Math. \textbf{38} (2020), 103--124.

\bibitem{Mazya:2010}
V.~Ma\'{z}ya and J.~Rossmann, \emph{{Elliptic Equations in Polyhedral
  Domains}}, {American Mathematical Society, Providence, RI}, 2010.

\bibitem{Morley:1968}
L.S.D. Morley, \emph{The triangular equilibrium problem in the solution of
  plate bending problems}, Aero. Quart. \textbf{19} (1968), 149--169.

\bibitem{Tai:2001}
T.K. Nilssen, X.C. Tai, and R.~Winther, \emph{A robust nonconforming
  {H}$^2$-element}, Math. Comp. \textbf{70} (2001), 489--505.

\bibitem{Niu:2022}
W.S. Niu and Z.W. Shen, \emph{Combined effects of homogenization and singular
  perturbations: Quantitative estimates}, Asymptot. Anal. \textbf{128} (2022),
  no.~3, 351--384.

\bibitem{Shi:1987}
Z.-C. Shi, \emph{{Convergence of the Trunc plate element}}, Comput. Method
  Appl. M. \textbf{62} (1987), no.~1, 71--88.

\bibitem{ShiZhang:1990}
Z.-C. Shi and F.~Zhang, \emph{Construction and analysis of a new
  energy-orthogonal unconventional plate element}, J. Comput. Math. \textbf{8}
  (1990), 75--91.

\bibitem{Specht:1988}
B.~Specht, \emph{Modified shape functions for the three node plate bending
  element passing the patch test}, Int. J. Numer. Meth. Engrg. \textbf{28}
  (1988), 705--715.

\bibitem{TianDu:2014}
X.~C. Tian and Q.~Du, \emph{Asymptotically compatible schemes and applications
  to robust discretization of nonlocal models}, SIAM J. Numer. Anal.
  \textbf{52} (2014), no.~4, 1641--1665.

\bibitem{TianDu:2020}
\bysame, \emph{Asymptotically compatible schemes for robust discretization of
  parametrized problems with applications to nonlocal models}, SIAM Review
  \textbf{62} (2020), no.~1, 199--227.

\bibitem{Zenik:1973}
A.~\v{Z}eni\v{s}ek, \emph{Polynomial approximation on tetrahedrons in the
  finite element method}, J. Approx. Theory \textbf{7} (1973), 334--351.

\bibitem{Walk:2014}
N.~Walkington, \emph{A {C}$^1$ tetrahedral finite element without edge degrees
  of freedom}, SIAM J. Numer. Anal. \textbf{52} (2014), no.~1, 330--342.

\bibitem{WangShiXu:2007}
M.~Wang, Z.-C. Shi, and J.C. Xu, \emph{A new class of {Z}ienkiewicz-type
  non-conforming element in any dimensions}, Numer. Math. \textbf{106} (2007),
  335--347.

\bibitem{WangXu:2006}
M.~Wang and J.C. Xu, \emph{The morley element for fourth order elliptic
  equations in any dimensions}, Numer. Math. \textbf{103} (2006), no.~1,
  155--169.

\bibitem{Wu:2019}
S.~Wu and J.~Xu, \emph{Nonconforming finite element spaces for $2m$-th order
  partial differential equations on {R}$^n$ simplicial grids when $m=n+1$},
  Math. Comp. \textbf{88}.

\bibitem{Zhang:2009}
S.Y. Zhang, \emph{A family of 3d continuously differentiable finite elements on
  tetrahedral grids}, Appl. Numer. Math. \textbf{59} (2009), 219--233.

\bibitem{ZienkiewiczTaylor:2009}
O.C. Zienkiewicz and R.L. Taylor, \emph{The {F}inite {E}lement {M}ethod for
  {S}olid and {S}tructural {M}echanics, 6th ed.}, Elsevier (Singapore), Pte
  Ltd., 2009.

\end{thebibliography}
\end{document}